\theoremstyle{plain}
\newtheorem{theorem}{Theorem}[section]
\newtheorem{lemma}[theorem]{Lemma}
\newtheorem{corollary}[theorem]{Corollary}
\newtheorem{proposition}[theorem]{Proposition}
\newtheorem{conjecture}[theorem]{Conjecture}
\theoremstyle{remark}
\newtheorem{remark}[theorem]{Remark}
\newtheorem*{note*}{Note}
\newtheorem*{remark*}{Remark}
\newtheorem*{example*}{Example}
\theoremstyle{definition}
\newtheorem*{definition*}{Definition}
\newtheorem{definition}[theorem]{Definition}
\newcommand{\Z}{\mathbb{Z}}
\newcommand{\Q}{\mathbb{Q}}
\newcommand{\F}{\mathbb{F}}
\newcommand{\Cl}{\mathrm{Cl}}
\newcommand{\tr}{\mathrm{Tr}}
\newcommand{\OL}{\mathcal{O}_{L}}
\newcommand{\OK}{\mathcal{O}_{K}}
\newcommand{\OM}{\mathcal{O}_{M}}
\newcommand{\ON}{\mathcal{O}_{N}}
\newcommand{\Frakp}{\mathfrak{P}}
\newcommand{\frakp}{\mathfrak{p}}
\newcommand{\frakb}{\mathfrak{b}}
\renewcommand{\mod}{\mathrm{mod}}
\renewcommand{\O}{\mathcal{O}}
\title[On the restricted Hilbert-Speiser and Leopoldt properties]
{On the restricted \\ Hilbert-Speiser and Leopoldt properties}
\author{Nigel P. Byott}
\address{Nigel P. Byott\\
Mathematics Research Institute\\
University of Exeter\\
Harrison \mbox{Building}\\
North Park Road\\
Exeter EX4 4QF\\
U.K.}
\email{N.P.Byott@ex.ac.uk}
\author{James E. Carter}
\address{James E. Carter\\
Department of Mathematics\\
College of Charleston\\
66 George Street\\
Charleston, SC 29424-0001\\
U.S.A.}
\email{carterj@cofc.edu}
\author{Cornelius Greither}
\address{Cornelius Greither\\
Fakult\"at f\"ur Informatik\\
Institut f\"ur theoretische Informatik und Mathematik\\
Universit\"at der Bundeswehr M\"unchen\\
85577 Neubiberg\\
Germany}
\email{cornelius.greither@unibw.de}
\author{Henri Johnston}
\address{Henri Johnston\\ 
St. John's College\\
Cambridge CB2 1TP\\
U.K.}
\email{H.Johnston@dpmms.cam.ac.uk}
\urladdr{http://www.dpmms.cam.ac.uk/$\sim$hlj31}
\subjclass[2000]{11R33, 11R29}
\keywords{Galois module structure, normal integral basis, associated order, Hilbert-Speiser field, Leopoldt field}
\date{Version of 9th March 2010}
\begin{document}

\maketitle

\begin{abstract} 
Let $G$ be a finite abelian group. A number field $K$ is called a
Hilbert-Speiser field of type $G$ if, for every tame $G$-Galois
extension $L/K$, the ring of integers $\mathcal{O}_L$ is free as an
$\mathcal{O}_K[G]$-module.  If $\mathcal{O}_L$ is free over the
associated order $\mathcal{A}_{L/K}$ for every $G$-Galois extension
$L/K$, then $K$ is called a Leopoldt field of type $G$.  It is
well-known (and easy to see) that if $K$ is Leopoldt of type $G$, then
$K$ is Hilbert-Speiser of type $G$. We show that the converse does not
hold in general, but that a modified version does hold
for many number fields $K$ (in particular, for $K/\Q$ Galois) when
$G=C_{p}$ has prime order. We give examples with $G=C_p$ to show that
even the modified converse is false in general, and that the
modified converse can hold when the original does not.
\end{abstract}

\section{Introduction} 

Let $L/K$ be a finite abelian extension of number fields with Galois group $G$.
The associated order is defined to be
$
\mathcal{A}_{L/K} := \{ x \in K[G] \, : \, x(\mathcal{O}_L) \subseteq
\mathcal{O}_L \}.
$
In the case $K=\Q$, Leopoldt's Theorem \cite{leopoldt} shows that the
ring of integers $\mathcal{O}_L$ of $L$ is free as a module over
$\mathcal{A}_{L/\Q}$. (A simplified proof of this result can be found
in \cite{lettl}.) More generally, we say that a number field $K$ is
Leopoldt if, for every finite abelian extension $L/K$, the ring of
integers $\mathcal{O}_L$ is free over $\mathcal{A}_{L/K}$ (note that
this differs from the definition of Leopoldt given in \cite{johnston}). Since
$\mathcal{A}_{L/K}=\mathcal{O}_K[G]$ if and only if $L/K$ is tame, 
Leopoldt's Theorem implies the celebrated Hilbert-Speiser
Theorem: Every tame finite abelian extension $L$ of $\mathbb{Q}$ has a
normal integral basis, that is, $\mathcal{O}_L$ is free as a
$\mathbb{Z}[G]$-module. (In this paper, we shall take ``tame''
to mean ``at most tamely ramified''.) A number field $K$ is called a
Hilbert-Speiser field if, for every tame finite abelian extension
$L/K$, the ring of integers $\mathcal{O}_L$ is free over $\mathcal{O}_K[G]$; in particular,
$\mathbb{Q}$ is such a field. The same reasoning as above shows that
if $K$ is Leopoldt then $K$ is Hilbert-Speiser. The converse follows
from Leopoldt's Theorem and the result proven in \cite{grrs} that
$\mathbb{Q}$ is the only Hilbert-Speiser field. Hence we have the
following observation.

\begin{theorem}
Let $K$ be a number field. Then $K$ is a Hilbert-Speiser field if and
only if $K$ is a Leopoldt field.
\end{theorem}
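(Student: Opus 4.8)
The plan is to prove the two implications separately, each being essentially a formal consequence of the definitions together with one of the two external results already recalled above: Leopoldt's Theorem \cite{leopoldt} that $\Q$ is a Leopoldt field, and the theorem of Greither--Replogle--Rubin--Srivastav \cite{grrs} that $\Q$ is the only Hilbert-Speiser field.

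First I would check that every Leopoldt field is Hilbert-Speiser. Let $K$ be Leopoldt and let $L/K$ be an arbitrary tame finite abelian extension with group $G$. Since $L/K$ is tame, we have $\mathcal{A}_{L/K} = \OK[G]$, so the Leopoldt property applied to this particular extension says precisely that $\OL$ is free over $\OK[G]$. As $L/K$ was arbitrary among tame abelian extensions, $K$ is Hilbert-Speiser. This direction uses nothing beyond the definitions and the characterization $\mathcal{A}_{L/K} = \OK[G] \iff L/K$ tame.

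For the converse, suppose $K$ is a Hilbert-Speiser field. By \cite{grrs}, the only Hilbert-Speiser number field is $\Q$, hence $K = \Q$; and by Leopoldt's Theorem, $\Q$ is a Leopoldt field, so $K$ is Leopoldt. The one genuine ``obstacle'' here is that this forward implication is not elementary at all: it rests entirely on the hard classification result of \cite{grrs}, with Leopoldt's classical theorem then supplying the conclusion. It is worth remarking that the resulting equivalence is somewhat degenerate --- both properties hold only for $K=\Q$ --- and it is precisely this degeneracy that motivates the restricted (type-$G$) versions of the two properties studied in the remainder of the paper.
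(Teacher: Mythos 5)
Your proof is correct and follows exactly the paper's own argument: the Leopoldt $\Rightarrow$ Hilbert-Speiser direction via the characterization $\mathcal{A}_{L/K}=\OK[G]$ for tame $L/K$, and the converse by combining the Greither--Replogle--Rubin--Srivastav classification ($\Q$ is the only Hilbert-Speiser field) with Leopoldt's Theorem. Your closing remark that the equivalence is degenerate and motivates the type-$G$ refinement also matches the paper's stated motivation.
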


The question arises as to whether a similar result holds when one
fixes the group $G$. 

\begin{definition}\label{definition}
Let $G$ be a finite abelian group and let $K$ be a number field.  Then
$K$ is a \emph{Hilbert-Speiser field of type $G$} if, for every tame
$G$-Galois extension $L/K$, the ring of integers $\mathcal{O}_{L}$ is
free as an $\mathcal{O}_K[G]$-module. Furthermore, $K$ is a
\emph{Leopoldt field of type $G$} if, for every $G$-Galois extension
$L/K$, the ring of integers $\mathcal{O}_{L}$ is free as an
$\mathcal{A}_{L/K}$-module.
\end{definition}

The following conjecture was stated in \cite{carter},
and proved there in the case in which $G=C_{p}$ is cyclic of prime order $p$
and $K$ contains a primitive $p$th root of unity 
$\zeta_{p}$ (see \cite[Theorem 1.2]{carter}).

\begin{conjecture}\label{conjecture}
Let $K$ be a number field and let $G$ be a finite abelian group.
Then $K$ is a Hilbert-Speiser field of type $G$ if and only if $K$ is
a Leopoldt field of type $G$. We denote this statement
by HS-L$(K,G)$.
\end{conjecture}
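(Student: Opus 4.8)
The plan is to treat the two implications separately, since one of them is essentially formal. If $K$ is a Leopoldt field of type $G$, then for every $G$-Galois extension $L/K$ the ring $\mathcal{O}_L$ is free over $\mathcal{A}_{L/K}$; restricting attention to those $L/K$ that are tame, we have $\mathcal{A}_{L/K}=\mathcal{O}_K[G]$, so freeness over the associated order is exactly freeness over the group ring, and $K$ is Hilbert-Speiser of type $G$. This is the same reasoning that passes from Leopoldt's Theorem to the Hilbert-Speiser Theorem, and it needs no hypotheses on $G$ or $K$. Thus HS-L$(K,G)$ reduces to the converse: Hilbert-Speiser of type $G$ implies Leopoldt of type $G$.

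For the converse I would follow the template of the unrestricted equivalence proved above: first pin down which fields $K$ can be Hilbert-Speiser of type $G$, and then verify the Leopoldt property directly for precisely those fields. In the unrestricted case the class of Hilbert-Speiser fields is so small (only $\mathbb{Q}$, by \cite{grrs}) that Leopoldt's Theorem finishes the job; here one expects an analogous but weaker rigidity, namely that being Hilbert-Speiser of type $G$ forces strong arithmetic constraints on $K$ (bounds on the class number, restrictions on the ramified primes and on the splitting of small primes), cutting the candidate fields down to a manageable family. A natural first move is to reduce to $G=C_{p}$ of prime order: writing a general abelian $G$ as a product of cyclic $p$-groups, one would try to show that both properties are governed by the cyclic factors, by restricting $G$-extensions to the subextensions cut out by subgroups and quotients of $G$ and arguing inductively.

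For $G=C_{p}$ the case $\zeta_{p}\in K$ is already settled in \cite[Theorem 1.2]{carter}, so the remaining work is the case $\zeta_{p}\notin K$. Here I would use class field theory to parametrise the $C_{p}$-extensions of $K$ and to extract the arithmetic conditions forced by the Hilbert-Speiser hypothesis, and then analyse the wildly ramified extensions one prime at a time. Since $G=C_{p}$, the module $\mathcal{O}_L$ is locally free of rank one over the commutative order $\mathcal{A}_{L/K}$ (classical for degree $p$), so freeness is equivalent to the vanishing of the class $[\mathcal{O}_L]$ in $\Pic(\mathcal{A}_{L/K})$; the task becomes showing that, under the constraints already extracted, this class is trivial for every $C_{p}$-extension, tame or wild. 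The local behaviour of $\mathcal{A}_{L/K}$ at the wildly ramified primes can be described explicitly using the known structure theory for associated orders in degree-$p$ extensions.

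The main obstacle, I expect, is precisely the wildly ramified case. At a wild prime the associated order is typically non-maximal (and need not be Gorenstein), so the passage from local freeness to triviality of $[\mathcal{O}_L]$ in $\Pic(\mathcal{A}_{L/K})$ is delicate, and there is no formal device turning a wild failure of freeness into a tame one — which is what a clean contrapositive proof of the converse would require. The reduction from general abelian $G$ to $C_{p}$ is a second sticking point, since embedding a given tame extension of subgroup- or quotient-type into a full $G$-extension is an embedding problem that may admit no tame solution. I would therefore expect the converse to go through cleanly only after imposing additional hypotheses on $K$ (for instance that $K/\mathbb{Q}$ be Galois), with the unrestricted statement hinging entirely on the arithmetic of the associated order at these wild primes.
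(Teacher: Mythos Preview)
The statement you are trying to prove is a \emph{conjecture} that the paper goes on to \emph{refute}: there is no proof of HS-L$(K,G)$ in the paper, and there cannot be one, because the paper produces explicit counterexamples. For $G=C_2\times C_2$, Theorem~\ref{C2C2fail} and Corollary~\ref{C2C2ex} give real cubic fields $K$ that are Hilbert--Speiser of type $G$ but admit a $G$-extension $L/K$ with $\mathcal{O}_L$ not even locally free over $\mathcal{A}_{L/K}$. For $G=C_5$, \S6 exhibits a sextic field (non-Galois over $\Q$) where HS-L$(K,C_5)$ fails locally, and quartic fields where it fails globally (mHS-L$(K,C_5)$ holds but HS-L$(K,C_5)$ does not, because $\tr_{L/K}(\mathcal{O}_L)$ is a non-principal ideal).

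The concrete error in your sketch is the parenthetical claim ``$\mathcal{O}_L$ is locally free of rank one over the commutative order $\mathcal{A}_{L/K}$ (classical for degree $p$)''. This is false: for wildly ramified $C_p$-extensions of $p$-adic fields, freeness of $\mathcal{O}_N$ over $\mathcal{A}_{N/M}$ is governed by the Bertrandias--Ferton criterion (Theorem~\ref{local-criteria}), which depends on the ramification number $t$ and the absolute ramification index $e$, and it genuinely fails in many cases (see Corollary~\ref{bp}). So the step where you pass from ``Hilbert--Speiser of type $C_p$'' to ``$[\mathcal{O}_L]=0$ in $\Pic(\mathcal{A}_{L/K})$'' collapses: the class may not even be defined. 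Your closing intuition---that the wild primes are the obstruction and that extra hypotheses on $K$ are needed---is exactly right, and is what the paper formalises: Proposition~\ref{equivalence} isolates local freeness as the missing ingredient, and Theorem~\ref{maintheorem} gives sufficient conditions (e.g.\ $K/\Q$ Galois) under which the \emph{modified} conjecture mHS-L$(K,C_p)$ holds. But the original HS-L$(K,G)$ as stated is simply false.
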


Miyata \cite{miyata} has investigated the integral Galois module
structure of wildly ramified extensions $L/K$ of number fields of
prime degree. A careful reading of his paper suggests that one should
expect the $\OK$-ideal $\tr_{L/K}(\OL)$ to be a global obstruction to
the freeness of $\OL$ over $\mathcal{A}_{L/K}$. It is therefore
natural to consider the adjusted module ${\tr}_{L/K}(\OL)^{-1}\OL$ in
place of $\OL$ itself. We mention also a related situation where a
similar adjustment is known to be necessary: see for example
\cite[Chapter III, \S3]{taylor}. Let $K$ be a number field, $G$ a
finite abelian group, and $\mathcal{H}$ a Hopf order in $K[G]$.  If
$L$ is a Galois extension of $K$ with group $G$ whose associated order
$\mathcal{A}_{L/K}$ coincides with $\mathcal{H}$, then $\OL$ is
locally free over $\mathcal{H}$. Moreover $\OL$ can be regarded (under
a mild hypothesis on $\mathcal{H}$) as a principal homogeneous space
over the dual Hopf order $\mathcal{H}^*$ to $\mathcal{H}$. There is a
``class invariant'' homomorphism from the group of principal
homogeneous spaces over $\mathcal{H}^*$ into the locally free class
group $\mathrm{Cl}(\mathcal{H})$. This associates to $\OL$ the class
$(\mathcal{H}^*)^{-1} (\OL) = (\tr_{L/K}(\OL)^{-1}\OL)$. On the other
hand, the map which simply associates to $\OL$ its class $(\OL)$ in
$\mathrm{Cl}(\mathcal{H})$ need not be a homomorphism. This lends
further support to the idea that, in general, the class
$(\tr_{L/K}(\OL)^{-1}\OL)$ may be a more natural object to study than
$(\OL)$ itself.

In the light of these observations, we consider in this article the 
following modified versions of Definition \ref{definition} and
Conjecture \ref{conjecture}: 

\begin{definition}\label{modified definition}
Let $G$ be a finite abelian group and let $K$ be a number field.  Then
$K$ is said to satisfy the \emph{modified Leopoldt condition of type
$G$} if, for every $G$-Galois extension $L/K$, the adjusted ring of integers
${\tr}_{L/K}(\OL)^{-1}\OL$ is free as an $\mathcal{A}_{L/K}$-module.
\end{definition}

\begin{conjecture}\label{modified conjecture}
Let $K$ be a number field and let $G$ be a finite abelian group. If
$K$ is a Hilbert-Speiser field of type $G$, then $K$ satisfies the
modified Leopoldt condition of type $G$. We denote this statement by
mHS-L$(K,G)$.
\end{conjecture}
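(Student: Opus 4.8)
The plan is to translate the modified Leopoldt condition into the vanishing of a class invariant and then to force that vanishing from the Hilbert-Speiser hypothesis. Fix a $G$-Galois extension $L/K$ and set $M := \tr_{L/K}(\OL)^{-1}\OL$. The first step is to put $M$ into the framework of a locally free class group. Since $M$ is obtained from $\OL$ by twisting by a fractional $\OK$-ideal, which is locally principal, $M$ is locally free over $\mathcal{A}_{L/K}$ of rank one as soon as $\OL$ is; hence $M$ is free over $\mathcal{A}_{L/K}$ precisely when its class $(M)$ vanishes in $\Cl(\mathcal{A}_{L/K})$. When $\mathcal{A}_{L/K}$ is a Hopf order $\mathcal{H}$, local freeness of $\OL$ holds automatically (as recalled in the introduction), and that same discussion identifies $(M)$ with the image $\psi(\OL) = ((\mathcal{H}^*)^{-1}(\OL))$ of $\OL$ under the class-invariant homomorphism $\psi$ from principal homogeneous spaces over $\mathcal{H}^*$ into $\Cl(\mathcal{H})$. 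The goal thus becomes to show $\psi(\OL) = 0$ for every $G$-extension $L/K$.

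Next I would bring in the Hilbert-Speiser hypothesis through the theory of realizable classes. By McCulloh's description of the classes realized by rings of integers of tame $G$-extensions, the assertion that $K$ is Hilbert-Speiser of type $G$ is equivalent to the vanishing of every realizable class in $\Cl(\OK[G])$. Because $\psi$ is a homomorphism, its image is a subgroup of $\Cl(\mathcal{H})$ --- a wild analogue of the group of realizable classes --- and I would compare it with $\Cl(\OK[G])$ via the maps induced by the inclusion $\mathcal{H} \subseteq \OK[G]$ and by duality, with the aim of showing that triviality of the tame realizable classes propagates to triviality of $\psi(\OL)$. The case in which $\mathcal{A}_{L/K}$ is not a Hopf order, where even the local freeness of $\OL$ can fail, would have to be handled separately, either by reducing to the Hopf situation or by estimating $(M)$ directly from the local structure of $\mathcal{A}_{L/K}$ at the wildly ramified primes.

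The hard part will be precisely this comparison between the wild class group $\Cl(\mathcal{H})$ and the tame class group $\Cl(\OK[G])$. The Hilbert-Speiser hypothesis controls classes in $\Cl(\OK[G])$, but $\psi(\OL)$ may record information living in the kernel of the natural map $\Cl(\mathcal{H}) \to \Cl(\OK[G])$ --- information about wild ramification that is invisible at the tame level. When this kernel is trivial, for instance when the local conditions at the wildly ramified primes collapse (as happens for $G = C_p$ over suitable base fields), the argument should close. Controlling this kernel for general $G$ and general $K$, and deciding whether the wild information it detects is genuinely constrained by the tame Hilbert-Speiser hypothesis, is the crux of the problem and the step I expect to be the main obstacle.
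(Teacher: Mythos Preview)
The statement you are attempting to prove is a \emph{conjecture}, and the paper does not prove it; on the contrary, the paper \emph{disproves} it. Theorem~\ref{C2C2fail} exhibits, for $G=C_2\times C_2$, number fields $K$ that are Hilbert-Speiser of type $G$ yet admit a $G$-extension $L/K$ for which $\OL$ is not even locally free over $\mathcal{A}_{L/K}$; and Section~6 constructs a sextic field $K$ (not Galois over $\Q$) that is Hilbert-Speiser of type $C_5$ but fails the modified Leopoldt condition of type $C_5$, again through local failure. So no proof of the conjecture in full generality can succeed.

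Your proposal in fact points exactly to the place where it breaks. You write that $M=\tr_{L/K}(\OL)^{-1}\OL$ is locally free over $\mathcal{A}_{L/K}$ ``as soon as $\OL$ is'', and later that the non-Hopf case ``would have to be handled separately''. But this is not a technicality to be disposed of: the counterexamples show that local freeness genuinely fails, so there is no class $(M)\in\Cl(\mathcal{A}_{L/K})$ to compare with anything, and the entire class-invariant framework collapses. The positive results the paper does obtain (Theorem~\ref{maintheorem}) proceed precisely by imposing hypotheses on $K$ and $p$ under which local freeness can be \emph{verified} via the Bertrandias--Ferton criteria, and then using Miyata's work (Proposition~\ref{equivalence}) to push the tame McCulloh-type vanishing to the wild side. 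Your comparison idea between $\Cl(\mathcal{H})$ and $\Cl(\OK[G])$ is in the right spirit for that restricted setting, but it cannot be made to work in the generality of the conjecture because the premise of local freeness is simply false there.
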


We observe immediately that $\tr_{L/K}(\OL)^{-1} \OL$ is
\emph{locally} free over $\mathcal{A}_{L/K}$ if and only if $\OL$
is. Moreover, $\tr_{L/K}(\OL)^{-1} \OL$ and $\OL$ are isomorphic as
$\mathcal{A}_{L/K}$-modules if and only if $\tr_{L/K}(\OL)$ is a
principal $\OK$-ideal. Thus, for a number field $K$ which is
Hilbert-Speiser of type $G$, there are potentially two ways in which
Conjecture \ref{conjecture} (respectively, Conjecture \ref{modified
conjecture}) might fail. On the one hand, it might fail locally, so
that there is some $G$-Galois extension $L$ of $K$ for which $\OL$ (and hence
also ${\tr}_{L/K}(\OL)^{-1}\OL$) is not even locally free over
$\mathcal{A}_{L/K}$. On the other hand, genuinely global failure may
occur, so that there is a $G$-Galois extension $L$ of $K$ for which $\OL$
(respectively, $\tr_{L/K}(\OL)^{-1} \OL$) is locally free over
$\mathcal{A}_{L/K}$, but not free over $\mathcal{A}_{L/K}$.

We shall show that neither conjecture is true in general, and indeed
that both local and global failure can occur. We shall prove,
nevertheless, that Conjecture
\ref{modified conjecture} holds in many interesting
cases. Specifically, we show first that both conjectures can fail
locally if $G$ is the elementary abelian group $C_2 \times C_2$.

\begin{theorem}\label{C2C2fail}
Let $K$ be a number field such that the ray class group $\Cl_4(\OK)$ modulo
$4 \OK$ is trivial, and such that some prime $\frakp$ of $K$
above $2$ has absolute ramification index at least $3$. 
Let $G=C_2 \times C_2$.
Then $K$ is Hilbert-Speiser of type $G$, but there exists
a $G$-Galois extension $L$ of $K$ such that $\OL$ is not locally free over
$\mathcal{A}_{L/K}$.
\end{theorem}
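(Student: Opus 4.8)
The plan is to prove the two assertions separately: that $K$ is Hilbert--Speiser of type $G$ by showing $\Pic(\OK[G])=1$, and that $\OL$ can fail to be locally free over $\mathcal{A}_{L/K}$ by reducing to a construction in the completion $K_{\frakp}$ that exploits the hypothesis $e(\frakp/2)\geq 3$.

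\emph{The Hilbert--Speiser assertion.} Since $|G|=4$, every tame $G$-extension $L/K$ is unramified above $2$, so by Noether's theorem $\OL$ is a locally free $\OK[G]$-module of rank one, hence free precisely when its class in $\Pic(\OK[G])$ vanishes; so it suffices to prove $\Pic(\OK[G])=1$. Let $\mathcal{M}=\OK^{4}$ be the maximal order in $K[G]=K^{4}$. A routine computation with the idempotents of $K[G]$ shows the conductor of $\OK[G]$ in $\mathcal{M}$ is $4\mathcal{M}$, giving a conductor (Milnor) square whose remaining corners $\mathcal{M}/4\mathcal{M}=(\OK/4\OK)^{4}$ and $\OK[G]/4\mathcal{M}$ are finite rings and hence have trivial Picard group. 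Milnor's Mayer--Vietoris sequence then exhibits $\Pic(\OK[G])$ as an extension of $\Pic(\mathcal{M})=\Cl(\OK)^{4}$ by the cokernel of a map whose component on $K_{1}(\mathcal{M})=(\OK^{\times})^{4}$ is coordinatewise reduction modulo $4$. But the hypothesis that $\Cl_{4}(\OK)$ is trivial says exactly that $\Cl(\OK)=1$ and that $\OK^{\times}\to(\OK/4\OK)^{\times}$ is onto; hence $\Pic(\mathcal{M})=1$ and the above map is already surjective, so $\Pic(\OK[G])=1$.

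\emph{Reduction of the second assertion to a local problem.} Set $F=K_{\frakp}$, so $e(F/\Q_{2})\geq 3$. I claim it suffices to exhibit a totally ramified $C_{2}\times C_{2}$-extension $M/F$ with $\mathcal{O}_{M}$ not free over $\mathcal{A}_{M/F}$. Indeed, by weak approximation $K^{\times}/(K^{\times})^{2}\to F^{\times}/(F^{\times})^{2}$ is surjective, so writing $M=F(\sqrt{\alpha},\sqrt{\beta})$ we may choose $a,b\in K^{\times}$ with the same classes as $\alpha,\beta$ modulo squares in $F$. Since $\alpha,\beta,\alpha\beta$ are nonsquares in $F$, so are $a,b,ab$ in $K$, whence $L:=K(\sqrt{a},\sqrt{b})$ is a genuine $C_{2}\times C_{2}$-extension with $L\otimes_{K}F\cong M$. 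Then $\frakp$ is (totally) ramified, in particular non-split, in $L/K$; completing at $\frakp$ turns $\OL$ into $\mathcal{O}_{M}$ and $\mathcal{A}_{L/K}$ into $\mathcal{A}_{M/F}$, and as $\mathcal{O}_{M}$ has constant rank one over the semilocal ring $\mathcal{A}_{M/F}$ it is locally free over $\mathcal{A}_{M/F}$ if and only if it is free. So non-freeness of $\mathcal{O}_{M}$ gives non-local-freeness of $\OL$ over $\mathcal{A}_{L/K}$, and together with the first part this completes the proof.

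\emph{The local construction and the main obstacle.} It remains to build, over a $2$-adic field $F$ with $e=e(F/\Q_{2})\geq 3$, a totally (hence wildly) ramified $C_{2}\times C_{2}$-extension $M/F$ whose ring of integers is not free over its associated order. Fixing a uniformizer $\pi$ of $F$, I would take $M=F\big(\sqrt{1+\pi^{j_{1}}w_{1}},\,\sqrt{1+\pi^{j_{2}}w_{2}}\big)$ with units $w_{i}$ and exponents $j_{1},j_{2}<2e$ chosen so that the three quadratic subextensions of $M/F$ have a deliberately \emph{unbalanced} triple of ramification breaks --- a configuration whose existence requires $2e$ to be large enough, i.e.\ $e\geq 3$. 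One then has to pin down $\mathcal{A}_{M/F}$ accurately enough to detect an obstruction to freeness: the expected mechanism is that the unbalanced ramification forces $\mathcal{A}_{M/F}$ to be a \emph{non-Gorenstein} $\mathcal{O}_{F}$-order, so that $\mathcal{O}_{M}$ cannot be $\mathcal{A}_{M/F}$-free (freeness would force the associated order to be Gorenstein); failing that, one argues directly, e.g.\ via a discriminant/index identity or by bounding below the minimal number of $\mathcal{A}_{M/F}$-generators of $\mathcal{O}_{M}$ and showing it exceeds one. This local analysis --- choosing $M$ so that both $\mathcal{A}_{M/F}$ and $\mathcal{O}_{M}$ can be controlled, and checking that $e\geq 3$ is precisely the threshold making the bad configuration available --- is the crux of the argument.
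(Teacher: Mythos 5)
Your first half is sound and amounts to the paper's own argument for the Hilbert--Speiser assertion: the paper cites McCulloh and Byott--Soda\"{\i}gui to see that $\Cl(\OK[C_2\times C_2])$ is a quotient of four copies of $\Cl_4(\OK)$, which is exactly what your explicit Milnor-square computation (conductor $4\mathcal{M}$, finite corners, Mayer--Vietoris) delivers. Your reduction of the second assertion to a purely local statement over $F=K_\frakp$ is also correct, and differs only cosmetically from the paper, which descends in the opposite direction by writing down global quadratic extensions $E_1=K(\sqrt{1+\pi^{2e-5}})$ and $E_2=K(\sqrt{\pi})$ with $\pi\in K$ a local parameter at $\frakp$, so that $L=E_1E_2$ is totally ramified at $\frakp$ and $\mathcal{A}_{L/K,\frakp}=\mathcal{A}_{N/M}$.

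The genuine gap is precisely the step you yourself label ``the crux'': you never produce a specific totally ramified biquadratic extension, never determine (or bound) its associated order, and never prove non-freeness. The appeal to a conjectural ``non-Gorenstein'' mechanism is a guess, not an argument, and the fallback (``one argues directly, e.g.\ via a discriminant/index identity'') is not carried out; as written, the second assertion of the theorem is unproved. The paper closes exactly this gap by quoting a ready-made non-freeness criterion (\cite[Theorem 3.13]{nonfree}, restated as Lemma \ref{non-free}): for a totally ramified $C_2\times C_2$-extension $N/M$ with lower ramification numbers $t_1\le t_2$ satisfying $t_2-\lfloor t_2/2\rfloor<2e$, the ring $\ON$ is not free over $\mathcal{A}_{N/M}$ whenever $3>\overline{t}_i>\overline{w}$ for some $i$, where $w$ is the valuation of the different and bars denote residues mod $4$. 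The chosen extension has upper breaks $u_1=5<u_2=2e$ (this is where $e\ge 3$ enters, both to make $2e-5\ge 1$ and to force $5<2e$), hence lower breaks $t_1=5$ and $t_2=4e-5$, giving $t_2-\lfloor t_2/2\rfloor=2e-2<2e$, $\overline{t}_1=1$ and, by Hilbert's formula, $\overline{w}=3-\overline{t}_2=0$; the criterion then applies with $i=1$. Without this lemma or an equivalent explicit computation of $\mathcal{A}_{N/M}$ and of a generator obstruction, your proof does not establish the existence of the required $L$.
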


\begin{corollary} \label{C2C2ex}
HS-L$(K,C_2\times C_2)$ does not hold if $K$ is any of the three real cubic 
fields $\Q(\alpha_i)$, $1 \leq i \leq 3$, where $\alpha_1$,
$\alpha_2$, $\alpha_3$ is a zero of 
$x^3+x^2-3x-1$, $x^3-x^2-5x-1$, $x^3+x^2-5x-3$, respectively. 
\end{corollary}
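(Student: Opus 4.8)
The strategy is to check that each field $K_i := \Q(\alpha_i)$ satisfies the two hypotheses of Theorem \ref{C2C2fail} with $G = C_2 \times C_2$. Once this is done, Theorem \ref{C2C2fail} shows that $K_i$ is Hilbert-Speiser of type $G$ but admits a $G$-Galois extension $L/K_i$ for which $\OL$ is not locally free over $\mathcal{A}_{L/K_i}$, and hence not free over $\mathcal{A}_{L/K_i}$; so $K_i$ is not Leopoldt of type $G$, and the biconditional HS-L$(K_i,G)$ fails. Fix $i$ and abbreviate $K = K_i$, $\alpha = \alpha_i$, $f = f_i$. Since $[K:\Q] = 3$, a prime $\frakp$ of $K$ above $2$ satisfies $e(\frakp/2) \in \{1,3\}$, so the ramification hypothesis of Theorem \ref{C2C2fail} is equivalent to the assertion that $2$ is totally ramified in $K/\Q$.

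For the ramification at $2$: in each of the three cases one has $f \equiv x^3 + x^2 + x + 1 = (x+1)^3 \pmod 2$. Dedekind's criterion at $p = 2$ then shows that $2 \nmid [\OK : \Z[\alpha]]$ in all three cases --- the relevant gcd in $\F_2[x]$, computed from the reduction of $\big((x+1)^3 - f\big)/2$, works out to $1$ each time --- so the factorisation of $f$ modulo $2$ governs the splitting of $2$ and gives $2\OK = \frakp^3$ with $\frakp$ of residue degree $1$. Thus $e(\frakp/2) = 3$, as required.

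It remains to show that $\Cl_4(\OK)$ is trivial. Using the exact sequence
\[
\OK^\times \longrightarrow (\OK/4\OK)^\times \longrightarrow \Cl_4(\OK) \longrightarrow \Cl(\OK) \longrightarrow 0,
\]
this amounts to checking that $h_K = 1$ and that the reduction map $\OK^\times \to (\OK/4\OK)^\times$ is surjective. The class number is controlled by the Minkowski bound, which is less than $6$ in each of the three cases: one verifies that the (few) prime ideals of norm at most $5$ are principal by exhibiting generators of small norm among the $\alpha + c$, $c \in \Z$ (for instance $N_{K/\Q}(\alpha+1) = -2$, so $\frakp$ is principal), whence $h_K = 1$. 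Since $4\OK = \frakp^6$ and $\OK/\frakp \cong \F_2$, the group $(\OK/4\OK)^\times$ has order $2^5 = 32$, and the surjectivity of $\OK^\times \to (\OK/4\OK)^\times$ is the one substantive point: it requires a pair of fundamental units of $K$ and a check that these, together with $-1$, generate this group of order $32$. The fundamental units of these three cubic fields of small discriminant are classical (and in any case can be produced by a computer algebra system), so the verification is a routine finite computation; alternatively one may invoke published tables of ray class groups. Doing this for each of the three fields completes the proof.
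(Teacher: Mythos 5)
Your proposal is correct and follows essentially the same route as the paper: both verify the two hypotheses of Theorem \ref{C2C2fail}, establishing that $2$ is totally ramified from the factorisation $f_i \equiv (x+1)^3 \pmod{2}$, and reducing the triviality of $\Cl_4(\OK)$ to $h_K=1$ together with surjectivity of $\OK^\times \longrightarrow (\OK/4\OK)^\times$ (which both you and the paper leave as a finite check against the fundamental units). The only difference is one of sourcing: the paper reads off $\OK=\Z[\alpha_i]$, the class numbers and the fundamental units from Cohen's tables, whereas you substitute Dedekind's criterion and the Minkowski bound for the first two items.
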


We shall then specialise to the case where $G=C_{p}$ is cyclic of
prime order $p$, as in \cite{carter}. The above-mentioned work of Miyata
allows us to prove the following key proposition, which in particular
shows that global failure of mHS-L$(K,C_p)$ cannot occur.

\begin{proposition}\label{equivalence}
A number field $K$ satisfies the modified Leopoldt condition of type
$C_p$ if and only if 
\begin{enumerate}
\item $K$ is Hilbert-Speiser of type $C_p$; and
\item for every wildly ramified $C_p$-Galois extension $L/K$, the ring of
integers $\mathcal{O}_L$ is locally free over its associated order
$\mathcal{A}_{L/K}$.
\end{enumerate}
Hence  mHS-L$(K,C_{p})$ holds if and only if either (a) does not hold
or (b) does hold. 
\end{proposition}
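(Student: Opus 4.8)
The plan is to prove both implications of the stated equivalence, in each direction treating tame and wildly ramified $C_p$-extensions separately; the wildly ramified case of the harder implication is where Miyata's results enter.

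First I would show that the modified Leopoldt condition of type $C_p$ implies (a) and (b). If $L/K$ is a tame $C_p$-extension, then $\mathcal{A}_{L/K}=\OK[C_p]$, and since the trace map $\OL\to\OK$ is surjective exactly when $L/K$ is tame, one has $\tr_{L/K}(\OL)=\OK$; thus the modified Leopoldt condition for such $L$ says precisely that $\OL$ is free over $\OK[C_p]$, which is (a). For (b), apply the modified Leopoldt condition to an arbitrary wildly ramified $C_p$-extension $L/K$: the adjusted module $\tr_{L/K}(\OL)^{-1}\OL$ is then in particular locally free over $\mathcal{A}_{L/K}$, and since $\tr_{L/K}(\OL)^{-1}$ is a locally principal $\OK$-ideal it follows that $\OL$ itself is locally free over $\mathcal{A}_{L/K}$ (the observation recorded immediately after Conjecture~\ref{modified conjecture}).

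For the converse, assume (a) and (b), and let $L/K$ be any $C_p$-extension; set $\mathfrak{t}=\tr_{L/K}(\OL)$. If $L/K$ is tame then $\mathfrak{t}=\OK$ and $\mathcal{A}_{L/K}=\OK[C_p]$, so the adjusted module is $\OL$ and is free by (a). If $L/K$ is wildly ramified, then by (b) and the observation just quoted $M:=\mathfrak{t}^{-1}\OL$ is a locally free $\mathcal{A}_{L/K}$-module of rank one; since the trace is $\OK$-linear, $\tr_{L/K}(M)=\mathfrak{t}^{-1}\tr_{L/K}(\OL)=\OK$, so $M$ is an $\mathcal{A}_{L/K}$-stable lattice in $L$ with principal (indeed unit) trace ideal. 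The key step is then to deduce that such an $M$ is actually $\mathcal{A}_{L/K}$-\emph{free}: for this one invokes Miyata's study of the integral Galois module structure of wildly ramified prime-degree extensions, whose upshot is that the trace ideal is the complete global obstruction — concretely, that the class in $\Cl(\mathcal{A}_{L/K})$ of a locally free $\mathcal{A}_{L/K}$-lattice $N$ in $L$ is the image of the class of its trace ideal $\tr_{L/K}(N)$ under the natural map $\Cl(\OK)\to\Cl(\mathcal{A}_{L/K})$, and hence vanishes when that trace ideal is principal. Applying this to $M$ finishes the proof of the equivalence.

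Finally, mHS-L$(K,C_p)$ is, by definition, the statement that (a) implies the modified Leopoldt condition of type $C_p$; by the equivalence just proved this is the same as ``(a) $\Rightarrow$ ((a) and (b))'', equivalently ``(a) $\Rightarrow$ (b)'', equivalently ``(a) fails or (b) holds''. The routine ingredients here are the tame reductions and the elementary bookkeeping linking $\OL$ with $\mathfrak{t}^{-1}\OL$; the genuine obstacle is the wildly ramified case of the converse — namely, extracting from Miyata's local computations the precise assertion that the trace ideal class is the \emph{only} obstruction to $\mathcal{A}_{L/K}$-freeness, and verifying that it applies to the twisted lattice $\mathfrak{t}^{-1}\OL$ and not merely to $\OL$. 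This will require combining Miyata's local structure theorem with a globalisation (class-group) argument.
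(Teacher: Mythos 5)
Your forward implication and your final logical reformulation agree with the paper; the problem is in the wildly ramified case of the converse. There you assert, as the ``upshot'' of Miyata's work, that for a locally free $\mathcal{A}_{L/K}$-lattice $N$ in $L$ the class $(N)\in\Cl(\mathcal{A}_{L/K})$ equals the image of the ideal class of $\tr_{L/K}(N)$ under a natural map $\Cl(\OK)\to\Cl(\mathcal{A}_{L/K})$, so that a principal (unit) trace ideal forces freeness. Miyata proves no such formula, and it cannot be true in general: what his Corollary to Theorem 2 actually yields (as recalled in \S\ref{realisable}) is only that the class $\mathrm{cl}(\OL)=(\frakb^{-1}\OL)$, with $\frakb=\tr_{L/K}(\OL)$, lies in the subgroup $\Cl'(\mathcal{A}_{L/K})^{\mathcal J}$ cut out by the Stickelberger ideal ${\mathcal J}\subseteq\Z[\Delta]$, and this subgroup need not be trivial. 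The tell-tale symptom is that your argument never invokes hypothesis (a) in the wild case: if your trace-ideal formula held, then (b) alone would give freeness of $\frakb^{-1}\OL$ for \emph{every} number field, and the Hilbert-Speiser hypothesis would be irrelevant to the wild half of the modified Leopoldt condition. Nothing in Miyata's paper, nor in the heuristic remark in the introduction that the trace ideal is ``a'' (not ``the complete'') global obstruction, supports the stronger claim.

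The paper closes exactly this gap with a global class-group argument that you are missing. By McCulloh's theorem, hypothesis (a) is equivalent to the triviality of the group of realisable classes $\Cl'(\OK[C_p])^{\mathcal J}$; the surjective $\Delta$-homomorphism $f\colon \Cl'(\OK[C_p])^{\mathcal J}\twoheadrightarrow\Cl'(\mathcal{A}_{L/K})^{\mathcal J}$ of (\ref{surj-map}) then forces $\Cl'(\mathcal{A}_{L/K})^{\mathcal J}$ to be trivial; and since Miyata places $(\frakb^{-1}\OL)$ in precisely that subgroup, the adjusted module is free. So the two ingredients you need to add are McCulloh's characterisation of Hilbert-Speiser fields of type $C_p$ and the surjection $f$; neither can be replaced by the trace-ideal formula you propose.
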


With this in mind, we point out the main result of \cite{gh}. Its
proof is based on a detailed analysis of locally free class groups and
ramification indices.

\begin{theorem}\label{real-not-HS}
Let $K$ be a totally real number field and let $p\geq5$ be prime.
Suppose that $K/\Q$ is ramified at $p$. If $p=5$ and
$[K(\zeta_{5}):K]=2$, assume further that there exists a prime
$\mathfrak{p}$ of $K$ above $p$ with absolute ramification index at least $3$. 
Then $K$ is not Hilbert-Speiser of type $C_{p}$.
\end{theorem}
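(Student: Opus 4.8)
The plan is to argue directly: to show that $K$ is not Hilbert--Speiser of type $C_{p}$, I would exhibit a single tame $C_{p}$-Galois extension $L/K$ for which $\OL$ is not free over $\OK[C_{p}]$. Since $L/K$ is tame, $\OL$ is locally free of rank one over $\OK[C_{p}]$ by Noether's theorem, and since $K[C_{p}]\cong K\times K(\zeta_{p})$ is a product of fields, the Eichler condition holds, so by the Jacobinski cancellation theorem $\OL$ is $\OK[C_{p}]$-free if and only if its class $(\OL)$ in the locally free class group $\Cl(\OK[C_{p}])$ is trivial. Thus it is enough to show that the subgroup $\mathcal{R}(K)\subseteq\Cl(\OK[C_{p}])$ of \emph{realisable classes} --- those of the form $(\OL)$ --- is non-trivial. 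By McCulloh's theorem $\mathcal{R}(K)$ is the image of an explicit Stickelberger-type homomorphism, so the whole question becomes a computation of that image.

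To carry out the computation I would first isolate the part of $\Cl(\OK[C_{p}])$ that is sensitive to the ramification of $K$ at $p$. Writing $\OK[C_{p}]=\OK[x]/(x^{p}-1)$ as the fibre product of $\OK$ and $R:=\OK[x]/\Phi_{p}(x)=\OK[\zeta_{p}]$ over $\OK/p\OK$ (via $x\mapsto1$, using $\Phi_{p}(1)=p$), the Mayer--Vietoris sequence of this conductor square reads
\[
R^{\times}\times\OK^{\times}\longrightarrow(\OK/p\OK)^{\times}\longrightarrow\Cl(\OK[C_{p}])\longrightarrow\Pic(R)\times\Cl(\OK)\longrightarrow0,
\]
and the cyclotomic units show that the image of $R^{\times}$ already contains the prime-field part $(\Z/p)^{\times}$ of $(\OK/p\OK)^{\times}$. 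A realisable class $(\OL)$ maps into the left-hand cokernel via the reduction modulo the primes above $p$ of a suitably normalised Kummer generator of $LK(\zeta_{p})/K(\zeta_{p})$; the requirement that this generator be of ``$\omega$-type'' for $\Delta=\Gal(K(\zeta_{p})/K)$ --- where $\omega$ is the mod-$p$ cyclotomic character --- cuts out precisely which elements of the cokernel occur, and conversely each such element is realised by a tame $L/K$ whose ramification is concentrated at a single auxiliary prime $\frakq\nmid p$ produced by a Chebotarev argument with prescribed Frobenius in $K(\zeta_{p})$ and prescribed residue. This reduces the theorem to the non-vanishing of a concrete quotient of $(\OK/p\OK)^{\times}$, modulo $p$-th powers and modulo the images of the unit groups of $K$ and of $K(\zeta_{p})$, in the $\omega$-component.

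The final step --- which I expect to be the main obstacle --- is the numerical comparison establishing this non-vanishing, and it is here that all the hypotheses are used. Since $K/\Q$ is ramified at $p$, the $p$-Sylow of $(\OK/p\OK)^{\times}=\prod_{\frakp\mid p}(\OK/\frakp^{e_{\frakp}})^{\times}$ is non-trivial, and the size of its $\omega$-component is read off from the $\Z_{p}$-module structure of the higher principal-unit filtrations $1+\frakp^{i}$, in terms of the ramification and residue degrees $e_{\frakp},f_{\frakp}$ and of $d=[K(\zeta_{p}):K]$. On the other side one must bound the image of the units: $K$ is totally real and $K(\zeta_{p})$ is totally complex, so Dirichlet's unit theorem together with Herbrand's eigenspace decomposition of the unit group bounds the rank of the $\omega$-part of $\mathcal{O}_{K(\zeta_{p})}^{\times}\otimes\Z_{p}$, and for $p\geq5$ this bound is small enough that the $p$-group above is not exhausted --- with the single borderline $p=5$, $d=2$, where the $\omega$-eigenspace of the units has rank $0$ and can be overcome only by forcing $e_{\frakp}\geq3$ for some $\frakp\mid p$. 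The delicate points are: pinning down the $\Z_{p}$-structure of the groups $1+\frakp^{i}$ precisely enough to extract $\omega$-eigenspace dimensions; keeping track of the cyclotomic-unit and torsion contributions to the image of $\mathcal{O}_{K(\zeta_{p})}^{\times}$; and accommodating the harmless but awkward fact that $R=\OK[\zeta_{p}]$ need not be the maximal order of $K(\zeta_{p})$, so that $R^{\times}$ may have only finite index in $\mathcal{O}_{K(\zeta_{p})}^{\times}$.
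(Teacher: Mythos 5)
This theorem is not proved in the present paper: it is quoted as the main result of \cite{gh}, and its proof there rests on ``a detailed analysis of locally free class groups and ramification indices''. Your opening reductions are fine and agree with that approach: non-freeness of $\OL$ for a tame $L/K$ is detected by the class $(\OL)$ in $\Cl(\OK[C_p])$, McCulloh's theorem reduces everything to showing $\Cl'(\OK[C_p])^{\mathcal{J}}\neq 1$, and the conductor square $\OK[C_p]=\OK\times_{\OK/p\OK}R$ with $R=\OK[\zeta_p]$ is the right tool.

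The gap is in \emph{where} you look for the obstruction. You propose to find it in the Mayer--Vietoris cokernel $U=(\OK/p\OK)^\times/(\mathrm{im}(R^\times)\cdot\mathrm{im}(\OK^\times))$, isolated in its ``$\omega$-component''. But the action of $\Delta=\mathrm{Aut}(C_p)$ on the bottom-right corner $R/(1-\zeta_p)R\cong\OK/p\OK$ is \emph{trivial} (since $\zeta_p^a\equiv\zeta_p\pmod{1-\zeta_p}$), so this quotient has no $\omega$-eigenspace at all; the Stickelberger ideal acts on $U$ only through its augmentation. Without an eigenspace restriction the rank count goes the wrong way precisely for totally real $K$: by Herreng's formula (quoted in the proof of Lemma \ref{hK}), $d_p\bigl((\OK/p\OK)^\times\bigr)\le\sum_{\frakp\mid p}f_\frakp(e_\frakp-1)=[K:\Q]-\sum_{\frakp\mid p} f_\frakp\le[K:\Q]-1=d_p(\OK^\times)$, so the image of $\OK^\times$ alone can already exhaust the $p$-part of $(\OK/p\OK)^\times$, and the image of $R^\times$ only makes matters worse. (This is exactly why the imaginary Galois case is accessible by such counting, the unit rank there being $\tfrac12[K:\Q]-1$, while the totally real case is not.) The obstruction must instead be located in the other constituent of $\Cl'(\OK[C_p])$, namely $\Pic(R)$: because $K/\Q$ is ramified at $p$, the order $R=\OK[\zeta_p]$ is non-maximal in $M=K(\zeta_p)$, and the kernel of $\Pic(R)\to\Cl(\OM)$ is a genuine $\Delta$-module of the form $(\OM/\mathfrak{f})^\times/\bigl((R/\mathfrak{f})^\times\cdot\mathrm{im}(\OM^\times)\bigr)$. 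There your key idea does apply: $M$ is CM since $K$ is totally real, so the minus part of $\OM^\times$ is essentially just roots of unity, the minus part of the local unit group $(\OM/\mathfrak{f})^\times$ is large when the conductor $\mathfrak{f}$ is, and the Stickelberger ideal acts essentially invertibly on minus parts. The size of $\mathfrak{f}$ is governed by the ramification indices, which is what produces the exceptional condition $e_\frakp\ge3$ when $p=5$ and $[K(\zeta_5):K]=2$. As written, your plan would stall at the final numerical comparison --- the step you yourself flagged as the main obstacle --- because in the group you chose that comparison is false.
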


We can often verify part (b) of Proposition \ref{equivalence} by
using the results of \cite{bf} and \cite{bbf}.
In other cases, we can quote existing results in the literature
showing that $K$ is not Hilbert-Speiser of type $C_{p}$
(see \cite{gh}, \cite{herreng}).
Combining all of these results gives the following theorem.

\begin{theorem}\label{maintheorem}
Let $p$ be a prime and let $K$ be a number field.
Suppose that at least one of the following conditions holds:
\begin{enumerate}
\item $p=2$ or $3$,
\item $K/\Q$ is unramified at $p$,
\item $K$ is totally real, or
\item $K$ is (totally) imaginary and $K/\Q$ is Galois.
\end{enumerate}
Then  mHS-L$(K,C_{p})$ holds.
\end{theorem}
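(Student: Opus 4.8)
The plan is to reduce the theorem to Proposition \ref{equivalence} and then dispose of each of the four listed conditions separately, in each case showing that \emph{either} $K$ is not Hilbert-Speiser of type $C_p$ (so (a) of Proposition \ref{equivalence} fails) \emph{or} every wildly ramified $C_p$-extension $L/K$ has $\mathcal{O}_L$ locally free over $\mathcal{A}_{L/K}$ (so (b) holds). By Proposition \ref{equivalence}, either alternative yields mHS-L$(K,C_p)$, so it suffices to verify the dichotomy. Note first that conditions (a)--(d) are not mutually exclusive, and a given $K$ may fall under several; we only need \emph{one} of them to apply.

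First I would handle condition (2): if $K/\Q$ is unramified at $p$, then any $C_p$-extension $L/K$ is tamely ramified (wild ramification in a degree-$p$ extension forces $p$ to ramify in $K$), so there are \emph{no} wildly ramified $C_p$-extensions of $K$ and part (b) of Proposition \ref{equivalence} is vacuously true. The same vacuity argument covers part of condition (1): when $p=2$ or $p=3$ and $K/\Q$ is unramified at $p$ we are already done, so the remaining content of (1) is the case where $p \in \{2,3\}$ and $K/\Q$ \emph{is} ramified at $p$. Here I would invoke the results of \cite{bf} and \cite{bbf} on the local Galois module structure of degree-$p$ extensions for small $p$: these should show that for $p=2$ and $p=3$, $\mathcal{O}_L$ is always locally free over $\mathcal{A}_{L/K}$ regardless of the ramification, giving (b). (If instead the cited results only give local freeness under some hypothesis, one falls back on the fact that the only fields one must still treat are then also covered by (3) or (4), or are shown non-Hilbert-Speiser by \cite{gh} or \cite{herreng}.)

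Next, condition (3): $K$ totally real. If $K/\Q$ is unramified at $p$ we are in case (2); so assume $K/\Q$ is ramified at $p$. For $p \geq 5$, Theorem \ref{real-not-HS} applies and tells us $K$ is not Hilbert-Speiser of type $C_p$ — except we must check the side hypothesis in the $p=5$, $[K(\zeta_5):K]=2$ case, namely the existence of a prime of $K$ above $5$ with absolute ramification index $\geq 3$. I would argue that if no such prime exists then the ramification of $5$ in $K$ is sufficiently constrained that one can instead establish part (b) directly via the explicit local computations of \cite{bf}/\cite{bbf}; this bookkeeping is where I expect the real work to lie. For $p=2$ or $3$ with $K$ totally real we have already finished under case (1). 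So condition (3) is handled by: non-Hilbert-Speiser via Theorem \ref{real-not-HS} in the generic subcase, and local freeness in the exceptional subcase.

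Finally, condition (4): $K$ totally imaginary and $K/\Q$ Galois. Again assume $p$ ramifies in $K$ and $p\geq 5$ (the rest being covered by (1) or (2)). The strategy here is to pass to the maximal totally real subfield $K^+$ of $K$: since $K/\Q$ is Galois, $K/K^+$ is a well-behaved quadratic extension, and the ramification of $p$ in $K$ is controlled by that in $K^+$ together with the behavior in $K/K^+$. One then wants to bootstrap from the totally real case — either $K^+$ is ramified at $p$ and Theorem \ref{real-not-HS} plus a descent/ascent argument shows $K$ is not Hilbert-Speiser of type $C_p$, or the ramification is mild enough that \cite{bf}/\cite{bbf} again give local freeness. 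The main obstacle throughout is organizing the case analysis on the decomposition of $p$ in $K$ so that each residual subcase is provably covered either by a non-Hilbert-Speiser input (\cite{grrs}, \cite{gh}, \cite{herreng}, Theorem \ref{real-not-HS}) or by a local-freeness input (\cite{bf}, \cite{bbf}); the Galois hypothesis in (4) is exactly what makes this decomposition uniform enough to push through.
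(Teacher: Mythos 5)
Your overall strategy---reduce everything to Proposition \ref{equivalence} and then, case by case, show that either $K$ fails to be Hilbert-Speiser of type $C_p$ or that local freeness holds for all wildly ramified $C_p$-extensions---is exactly the paper's strategy, and your treatment of condition (c) (Theorem \ref{real-not-HS} in the generic subcase, explicit local computations via \cite{bf}, \cite{bbf} in the exceptional subcase $p=5$, $e=2$) matches the paper. However, there are two genuine gaps.

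First, your disposal of condition (b) is wrong. You claim that if $K/\Q$ is unramified at $p$ then every $C_p$-extension of $K$ is tame, so that part (b) of Proposition \ref{equivalence} is vacuous. This is false: wild ramification in $L/K$ at a prime $\frakp$ of $K$ above $p$ does not require $\frakp$ to be ramified in $K/\Q$. Already $K=\Q$ is unramified at $p$ over itself, yet the degree-$p$ subfield of $\Q(\zeta_{p^2})$ is a wildly ramified $C_p$-extension of $\Q$. What is actually true, and what the paper uses, is that $e=1$ forces the ramification number to satisfy $1 \leq t \leq p/(p-1)$, hence $t=1$; one then still has to invoke Theorem \ref{local-criteria} to conclude that $\O_N$ is free over $\mathcal{A}_{N/M}$. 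The same correction propagates into your treatment of condition (a): the case $p\in\{2,3\}$ with $K/\Q$ unramified at $p$ is not ``already done'' by vacuity, though your fallback (local freeness always holds for $p=2,3$ by \cite{bf}, \cite{bbf}) does rescue it---for $p=3$ this is Corollary \ref{3-corollary}, and the paper handles $p=2$ via \cite[Theorem 1.2]{carter} instead.

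Second, your treatment of condition (d) is missing the key input and proposes a route that does not obviously work. Passing to the maximal totally real subfield $K^+$ and ``bootstrapping'' from Theorem \ref{real-not-HS} requires transferring the Hilbert-Speiser property (or its failure) between $K$ and $K^+$, and no such ascent/descent is available or even plausible here. The paper instead quotes the end of the proof of \cite[Proposition 3.4]{herreng}: if $K$ is totally imaginary, Galois over $\Q$, and Hilbert-Speiser of type $C_p$ with $p\geq 5$, then every absolute ramification index $e$ at $p$ satisfies $e \leq 2p/(p-2) < 4$, so $e\in\{2,3\}$; one then checks local freeness for $t\in\{1,2,3\}$ by computing the continued fraction expansions of $t/p$ and applying Theorem \ref{local-criteria}. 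Without Herreng's bound on $e$ (or some substitute), your case analysis for (d) cannot be closed.
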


\begin{corollary} \label{Gal-mHS-L}
Let $p$ be a prime and let $K$ be a number field such that $K/\Q$
is Galois. Then  mHS-L$(K,C_{p})$ holds.
\end{corollary}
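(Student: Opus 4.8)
The plan is to deduce the corollary directly from Theorem \ref{maintheorem}: I would check that whenever $K/\Q$ is Galois, at least one of the four hypotheses (a)--(d) of Theorem \ref{maintheorem} must hold, so that mHS-L$(K,C_p)$ follows immediately. No new machinery is needed; the point is purely to see that the hypotheses of Theorem \ref{maintheorem} are jointly exhaustive in the Galois case.

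First I would handle the small primes: if $p=2$ or $p=3$, then hypothesis (a) applies and there is nothing more to prove. So from that point on I would assume $p\geq 5$. The remaining task is to show that a Galois number field $K$ falls under hypothesis (c) or (d), and for this I would invoke the standard dichotomy that a number field $K$ with $K/\Q$ Galois is either totally real or totally imaginary. The argument is short: since $K/\Q$ is normal, all embeddings $\sigma\colon K\hookrightarrow\C$ have the same image $K'\subseteq\C$; if $K'\subseteq\R$ then every embedding of $K$ is real and $K$ is totally real, while otherwise complex conjugation restricts to a nontrivial automorphism of $K'$, so no embedding of $K$ is real and $K$ is totally imaginary.

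With this dichotomy in hand I would conclude as follows: in the totally real case, hypothesis (c) of Theorem \ref{maintheorem} applies; in the totally imaginary case, hypothesis (d) applies, precisely because we are assuming $K/\Q$ is Galois. In either case Theorem \ref{maintheorem} yields that mHS-L$(K,C_p)$ holds, completing the proof. I do not expect any genuine obstacle here: all of the substantive input — the reduction via Proposition \ref{equivalence}, the non-Hilbert-Speiser results behind Theorem \ref{real-not-HS}, and the local freeness results of \cite{bf,bbf,gh,herreng} — is already packaged inside Theorem \ref{maintheorem}. The only thing that needs care is recording the real/imaginary dichotomy for Galois number fields, since that is exactly what makes conditions (c) and (d) cover all remaining cases once $p\geq 5$.
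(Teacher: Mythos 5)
Your proposal is correct and matches the paper's (implicit) argument exactly: the paper states Corollary \ref{Gal-mHS-L} as an immediate consequence of Theorem \ref{maintheorem}, relying on precisely the dichotomy you record, namely that a number field Galois over $\Q$ is either totally real or totally imaginary, so that case (a) covers $p\in\{2,3\}$ and cases (c) and (d) cover $p\geq 5$. Nothing further is needed.
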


\begin{remark} Since the only primes of $K$ which divide $\tr_{L/K}(\OK)$ are those
which are wildly ramified in $L/K$, the conjectures mHS-L$(K,C_p)$
and HS-L$(K,C_p)$ coincide if every prime of $K$ above $p$ is
principal. In this case (for instance, if $p$ remains prime in $K$, or if $K$ has class number
1) one can replace ``mHS-L$(K,C_p)$'' with ``HS-L$(K,C_p)$'' in the conclusions of Theorem 1.10 and Corollary 1.11.
\end{remark}

We end the paper with some explicit examples. Firstly, we exhibit a
sextic field which is not Galois over $\Q$, and for which mHS-L$(K,C_5)$
(and also HS-L$(K,C_5)$) does not hold. This suggests that one should not
expect any significant strengthening of Theorem \ref{maintheorem} to
be possible. Secondly, we justify the introduction of the 
conjecture mHS-L$(K,G)$ in place of HS-L$(K,G)$ by giving several
examples of quartic fields $K$ satisfying condition (b) of 
Theorem \ref{maintheorem}, so that mHS-L$(K,C_5)$ holds, but for
which HS-L$(K,C_5)$ does \emph{not} hold.

\section{Local Failure for Elementary Abelian Extensions of Degree 4}

In this section, we prove Theorem \ref{C2C2fail} and Corollary
\ref{C2C2ex}. We first need a criterion which, for a $G$-Galois
extension $N/M$ of $p$-adic fields, guarantees that the valuation ring
$\ON$ of $N$ is \emph{not} free over its associated order
$\mathcal{A}_{N/M}$. Such a criterion is given by \cite[Theorem
3.13]{nonfree}. It is valid for any abelian $p$-group $G$, and in fact
applies not just to $\ON$ but to any power $\Frakp^h$ of the maximal
ideal $\Frakp$ of $\ON$. Specialising to the case $p=2$, $G=C_2 \times
C_2$, $N/M$ is totally ramified, and $h=0$, this result reads as
follows.
 
\begin{lemma} \label{non-free}
Let $M$ be a finite extension of $\Q_2$ with absolute ramification
index $e$. Let $N/M$ be a totally ramified $C_2 \times C_2$-Galois extension 
with ramification numbers $t_1 \leq t_2$ (in the lower numbering).
Suppose further that 
\begin{equation} \label{Vost}
  t_2 - \left \lfloor \frac{t_2}{2} \right \rfloor < 2e. 
\end{equation}
Let $w$ be the valuation of the different of $N/M$, and for $a \in \Z$
let $\overline{a}$ denote the least non-negative residue of $a$ modulo
$4$. Then $\ON$ is \emph{not} free over $\mathcal{A}_{N/M}$ if, for at least one
value of $i \in \{1,2\}$, we have
\begin{equation}\label{nonfree-crit}
  3 > \overline{t}_i > \overline{w}. 
\end{equation}
\end{lemma}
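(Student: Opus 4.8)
The plan is to derive this lemma directly from \cite[Theorem~3.13]{nonfree}, of which it is the promised specialisation; so there is essentially nothing to prove beyond checking that the hypotheses and conclusion of that theorem reduce, in the present situation, to \eqref{Vost} and \eqref{nonfree-crit}. First I would record the general statement in the form needed here: for an abelian $p$-group $G$, a totally ramified $G$-Galois extension $N/M$ of $p$-adic fields with ramification numbers (in the lower numbering) $t_1 \le t_2 \le \cdots$ and different valuation $w$, and an integer $h \ge 0$, provided the ramification numbers are suitably bounded in terms of $p$ and the absolute ramification index $e$ of $M$, the $\mathcal{A}_{N/M}$-module $\Frakp^h$ fails to be free as soon as some $t_i$, together with $w$ and $h$, satisfies an explicit congruence condition modulo $p^2$.

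Next I would carry out the substitution $p = 2$, $G = C_2 \times C_2$, $h = 0$, with $N/M$ totally ramified, so that the ramification data consists of just two numbers $t_1 \le t_2$. With $p = 2$ the residue-modulo-$p^2$ notation becomes reduction modulo $4$, written $\overline{\,\cdot\,}$, and $p^2 - 1 = 3$; taking $h = 0$ eliminates the $h$-dependence, so the congruence criterion collapses to the requirement that for some $i \in \{1,2\}$ the residue $\overline{t}_i$ lie strictly between $\overline{w}$ and $3$, which is exactly \eqref{nonfree-crit}. Similarly, the general bound on the ramification numbers specialises (with $p = 2$) to $t_2 - \lfloor t_2/2 \rfloor < 2e$, which is \eqref{Vost} (if \cite[Theorem~3.13]{nonfree} imposes its bound on every ramification number, the case $t_1$ follows automatically, since $t \mapsto t - \lfloor t/2 \rfloor$ is non-decreasing and $t_1 \le t_2$).

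The only genuine work is the bookkeeping in this last step: one has to verify that the constants in \cite[Theorem~3.13]{nonfree} --- the precise form of the ramification bound and the exact interval occurring in the mod-$p^2$ congruence --- do specialise to the clean expressions $2e$ and $3 > \overline{t}_i > \overline{w}$, and that the phrase ``for at least one value of $i$'' in the present statement faithfully records the disjunctive (over the ramification numbers) nature of the criterion in \cite{nonfree}. I expect this transcription to be entirely routine. The substantive content --- the reason a congruence of this shape obstructs freeness of $\Frakp^h$ over its associated order --- lies in \cite{nonfree} and is not reproved here.
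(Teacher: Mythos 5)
Your proposal matches the paper exactly: the paper gives no separate proof of Lemma \ref{non-free}, but simply states that it is the specialisation of \cite[Theorem 3.13]{nonfree} to $p=2$, $G=C_2\times C_2$, $h=0$ and $N/M$ totally ramified, which is precisely the reduction you carry out. The bookkeeping you describe (mod $p^2$ becoming mod $4$, the bound becoming $t_2-\lfloor t_2/2\rfloor<2e$, the disjunction over $i\in\{1,2\}$) is the intended content of that specialisation.
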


By Hilbert's formula for the different \cite[Chapter IV, \S1,
Proposition\ 4]{serre}, we have $w=3(t_1+1)+(t_2-t_1)$. Since $t_1$
and $t_2$ must both be odd \cite[Chapter IV, \S2, Proposition\ 11 and Exercise
3(f)]{serre}, it follows that
\begin{equation} \label{diff-val}
 \overline{w} = 3-\overline{t}_2. 
\end{equation}

We now turn to the question of recognising Hilbert-Speiser fields of
type $C_2 \times C_2$.
 
\begin{proposition}\label{HSC2C2}
Let $K$ be a number field such that $\Cl_4(\OK)$ is
trivial. Then $K$ is Hilbert-Speiser of type $C_2 \times C_2$.
\end{proposition}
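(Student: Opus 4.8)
To prove Proposition~\ref{HSC2C2} I would establish the stronger statement that the locally free class group $\Cl(\mathcal{O}_K[G])$ vanishes, where $G=C_2\times C_2$; the Hilbert--Speiser property then follows at once. Indeed, if $L/K$ is any tame $G$-Galois extension, then by Noether's theorem $\OL$ is a locally free $\mathcal{O}_K[G]$-module of rank $1$, and so determines a class in $\Cl(\mathcal{O}_K[G])$. If that group is trivial then $\OL$ is stably free, and since $K[G]\cong K^{4}$ is a product of number fields --- in particular has no totally definite quaternion component --- the Eichler condition holds, so Jacobinski's cancellation theorem gives $\OL\cong\mathcal{O}_K[G]$; that is, $\OL$ has a normal integral basis.

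To compute $\Cl(\mathcal{O}_K[G])$, set $\Lambda=\mathcal{O}_K[G]$ and let $\Gamma\cong\mathcal{O}_K^{4}$ be the maximal order of $K[G]\cong K^{4}$, with one factor of $\mathcal{O}_K$ for each character of $G$. Since each primitive idempotent of $\Q[G]$ has the shape $\frac{1}{4}\sum_{g\in G}\chi(g)g$, we have $4\Gamma\subseteq\Lambda$; hence the conductor $\mathfrak{f}$ of $\Lambda$ in $\Gamma$ contains $4\Gamma$ (in fact a brief computation gives $\mathfrak{f}=4\Gamma$), and the finite ring $\Gamma/\mathfrak{f}$ is an algebra over $\mathcal{O}_K/4\mathcal{O}_K$. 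The conductor square with corners $\Lambda$, $\Gamma$, $\Lambda/\mathfrak{f}$, $\Gamma/\mathfrak{f}$ is a Milnor square, so the standard Mayer--Vietoris sequence in algebraic $K$-theory applies; using that $\Lambda/\mathfrak{f}$ and $\Gamma/\mathfrak{f}$ are finite (hence have trivial Picard group) and that $SK_1(\mathcal{O}_K)=0$, one obtains an exact sequence
\[
  \Gamma^{*}\times(\Lambda/\mathfrak{f})^{*}\longrightarrow(\Gamma/\mathfrak{f})^{*}\longrightarrow\Cl(\Lambda)\longrightarrow\Cl(\Gamma)\longrightarrow 0,
\]
in which the first map is induced by reduction modulo $\mathfrak{f}$ and $\Cl(\Gamma)\cong\Cl(\OK)^{4}$.

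Now I invoke the hypothesis. Triviality of the ray class group $\Cl_4(\OK)$ modulo $4\mathcal{O}_K$ gives $\Cl(\OK)=0$, so $\Cl(\Gamma)=0$; and, through the exact sequence $\mathcal{O}_K^{*}\to(\mathcal{O}_K/4\mathcal{O}_K)^{*}\to\Cl_4(\OK)\to\Cl(\OK)\to 0$, it also forces $\mathcal{O}_K^{*}\to(\mathcal{O}_K/4\mathcal{O}_K)^{*}$ to be surjective. As $\Gamma^{*}=(\mathcal{O}_K^{*})^{4}$ maps to $(\Gamma/\mathfrak{f})^{*}$ through $\big((\mathcal{O}_K/4\mathcal{O}_K)^{*}\big)^{4}$, this surjectivity makes $\Gamma^{*}\to(\Gamma/\mathfrak{f})^{*}$ surjective, so the leftmost map above has trivial cokernel. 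Hence $\Cl(\Lambda)=0$, and the first paragraph completes the proof.

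I do not anticipate a genuine obstacle: this is the familiar Milnor-patching computation of the class group of an order sitting inside its maximal order. The two points needing attention are (i) pinning down the conductor --- it is divisible by $4=|G|$, not merely by the exponent $2$, which is precisely why the natural hypothesis involves the ray class group modulo $4\mathcal{O}_K$; and (ii) remembering in the first paragraph to pass from ``trivial class'' to ``free'' via the cancellation theorem, which is legitimate here because $K[C_2\times C_2]$ is a product of fields. One could instead try to derive the result from the corresponding statement for $C_2$ by a further patching argument, but the direct computation above is cleaner; note also that it yields the sharper conclusion $\Cl(\mathcal{O}_K[C_2\times C_2])=0$.
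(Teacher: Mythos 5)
Your proposal is correct, and its first half is exactly the paper's: reduce the Hilbert--Speiser property to the vanishing of $\Cl(\OK[G])$ via Noether's theorem plus cancellation (the paper phrases the cancellation step as ``a locally free module over an abelian group ring order is determined by its rank and its class,'' which is the same Eichler/Jacobinski fact you invoke). Where you diverge is in the second half: the paper simply cites McCulloh (or Byott--Soda\"{\i}gui) for the statement that $\Cl(\OK[C_2\times C_2])$ is a quotient of four copies of $\Cl_4(\OK)$, whereas you prove the needed vanishing directly from the conductor square $\Lambda=\OK[G]\subset\Gamma\cong\OK^4$ with $\mathfrak{f}=4\Gamma$ and the Mayer--Vietoris sequence. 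Your computation checks out: the conductor is indeed $4\Gamma$ (each idempotent has denominator $4$), the sequence $\Gamma^{*}\times(\Lambda/\mathfrak{f})^{*}\to(\Gamma/\mathfrak{f})^{*}\to\Cl(\Lambda)\to\Cl(\Gamma)\to 0$ is the standard one for commutative orders (with $SK_1$ issues harmless here), and triviality of $\Cl_4(\OK)$ kills both ends exactly as you say. In effect you have unpacked the proof of the result the paper quotes; your route is self-contained and yields the slightly sharper statement $\Cl(\OK[C_2\times C_2])=0$ explicitly, while the paper's route is shorter and places the computation where it already exists in the literature. The only point worth flagging is cosmetic: the cited results describe $\Cl(\OK[G])$ as a quotient of $\Cl_4(\OK)^4$, whereas your sequence exhibits it as an extension of $\Cl(\OK)^4$ by a quotient of $\bigl((\OK/4\OK)^{*}/\mathrm{im}(\OK^{*})\bigr)^4$; both pieces die under the hypothesis, so the conclusion is the same.
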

\begin{proof}
For any finite group $G$, the ring of integers $\OL$ in a tame
$G$-Galois extension $L$ of $K$ is a locally free $\OK[G]$-module of rank
1. Moreover, if $G$ is abelian, then a locally free $\OK[G]$-module is
determined up to isomorphism by its rank and its class in the locally
free class group $\Cl(\OK[G])$. Thus, if $\Cl(\OK[G])$ is trivial, then
$\OL$ is necessarily free for any such $L$, and $K$ is Hilbert-Speiser
of type $G$.

We now take $G=C_2 \times C_2$. By \cite[\S2]{McCulloh} (or, more
explicitly, \cite[Proposition 2.4]{BS}), $\Cl(O_K[G])$ is isomorphic
to a certain quotient of 4 copies of $\Cl_4(O_K)$. Thus the triviality
of $\Cl_4(\OK)$ implies that of $\Cl(\OK[G])$.
\end{proof} 

\begin{proof}[Proof of Theorem \ref{C2C2fail}]
Let $K$ be as in the statement of the theorem, and let $G=C_2 \times
C_2$. Then $K$ is Hilbert-Speiser of type $G$ by Proposition
\ref{HSC2C2}.

Now let $\pi \in K$ be a local parameter at $\frakp$, let
$M=K_\frakp$ be the completion of $K$ at $\frakp$, and let $e \geq 3$
be the absolute ramification index of $\mathfrak{p}$. Let 
$E_1=K(\sqrt{1+\pi^{2e-5}})$, $E_2=K(\sqrt{\pi})$, and $L=E_1
E_2$. The completions of $E_1$ and $E_2$ at $\frakp$ are quadratic
extensions $F_1$, $F_2$ of $M$ with ramification numbers $u_1=5$,
$u_2=2e$ respectively. Using standard results on the upper and lower
ramification filtrations (see \cite[Chapter IV, \S3]{serre}), we find
that the completion of $L$ at $\frakp$ is the totally ramified
biquadratic extension $N=F_1 F_2$ of $M$ with \emph{upper}
ramification numbers $u_1<u_2$, and hence with lower ramification
numbers $t_1=u_1=5$ and $t_2=u_1+2(u_2-u_1)=4e-5$. We apply Lemma
\ref{non-free} to the $G$-Galois extension $N/M$. Firstly, (\ref{Vost}) holds
since $t_2 - \left \lfloor \frac{t_2}{2}\right \rfloor =
(4e-5)-(2e-3)=2e-2$. Secondly, (\ref{nonfree-crit}) holds for $i=1$ as
$t_1 \equiv 1 \pmod{4}$, and $w \equiv 0 \pmod{4}$ by
(\ref{diff-val}).  This shows that the valuation ring $\ON$ is not free
over $\mathcal{A}_{N/M}$. But since $\frakp$ is totally ramified in
$L/K$, we have $\mathcal{A}_{L/K,\frakp}=\mathcal{A}_{N/M}$. Hence
$\OL$ is not locally free over $\mathcal{A}_{L/K}$.
\end{proof}

\begin{proof}[Proof of Corollary \ref{C2C2ex}]
A list of all real cubic fields of discriminant $\leq 3132$ is given
by Cohen \cite[Table B.4]{cohen}. From this, we read off that the
fields $K=\Q(\alpha_i)$ for $i=1$, 2, 3 are real cubic fields: in
fact, they are the unique such fields (up to Galois conjugacy) of
discriminant 148, 404, 564, respectively, and none of them is normal over $\Q$. In each case,
$\OK=\Z[\alpha_i]$. By factoring the given polynomial over $\F_2$,
we check that 2 is totally ramified in $K$. Thus the unique prime of
$K$ above 2 has ramification index $e=3$.

It remains to verify that $\Cl_4(\OK)$ is trivial. This can be done
either using PARI \cite{PARI2}, or as follows. We observe that the
canonical map $\Cl_4(\OK) \twoheadrightarrow \Cl(\OK)$ is injective if
and only if the natural map $\OK^\times \longrightarrow
(\OK/4\OK)^\times$ is surjective. In each of the three cases, we can
check the surjectivity of the latter map by hand, using the
fundamental units given in Cohen's table. This table also tells us
that $K$ has class number 1. Thus $\Cl_4(\OK)=1$ as required.
\end{proof}

\section{Realisable Classes and the Proof of Proposition
  \ref{equivalence}}\label{realisable} 

Let $K$ be a number field and let $p$ be a prime.  Let $\Delta \cong
(\Z/p\Z)^{\times}$ be the group of automorphisms of $C_p$.  Then the
locally free class group $\Cl(\mathcal{O}_K[C_p])$ is a
$\Delta$-module.  As $L/K$ varies over all tame $C_{p}$-Galois
extensions of $K$, the class $(\mathcal{O}_L)$ of $\mathcal{O}_L$
varies over a subset $R(\mathcal{O}_K[C_p])$ of
$\Cl(\mathcal{O}_K[C_p])$. Let $\Cl(\mathcal{O}_K)$ denote the ideal
class group of $K$ and let $\Cl'(\mathcal{O}_K[C_p])$ be the kernel of
the map $\Cl(\mathcal{O}_K[C_p]) \longrightarrow \Cl(\mathcal{O}_K)$
induced by augmentation.
Let ${\mathcal J}$ be the Stickelberger
ideal in ${\mathbb Z}[\Delta]$. In \cite{McCulloh}, it is shown that
$R(\mathcal{O}_K[C_p])$ is the subgroup
$\Cl'(\mathcal{O}_K[C_p])^{\mathcal J}$ of $\Cl(\mathcal{O}_K[C_p])$
generated by $\{ c^{\alpha}: c \in \Cl'(\mathcal{O}_K[C_p]), \alpha \in {\mathcal J}\}$.

Now assume that $p$ is odd, and let $\Sigma$ be the element
$\Sigma_{g\in C_{p}}g$ in the group ring $K[C_{p}]$.  For any wildly
ramified $C_p$-Galois extension $L/K$, there is an integral
$\OK$-ideal $\frakb$ such that $\frakb(\mathcal{A}_{L/K}\cap
K\Sigma)=\OK\Sigma$, and it is easy to see that
${\tr}_{L/K}(\OL)={\frakb}$.  Miyata \cite{miyata} associates to $\OL$
a class $\mathrm{cl}(\OL)$ in the locally free class group
$\Cl(\mathcal{A}_{L/K})$, and then investigates the behaviour of
this class as $L$ varies over extensions with the same associated
order. Note however that in general $\OL$ need not be locally free
over $\mathcal{A}_{L/K}$, so $\mathrm{cl}(\OL)$ should not be
interpreted simply as ``the class of'' the $\mathcal{A}_{L/K}$-module
$\OL$. In the case that $\OL$ is locally free over $\mathcal{A}_{L/K}$,
one sees from \cite[ p.160]{miyata} that $\mathrm{cl}(\OL)$ is the
class in $\Cl(\mathcal{A}_{L/K})$ of the locally free module
$\frakb^{-1} \OL$, and not of $\OL$ itself. As in the previous
paragraph, $\Cl(\mathcal{A}_{L/K})$ is a $\Delta$-module and
\cite[Corollary to Theorem 2]{miyata} shows that $\mathrm{cl}(\OL)$
lies in $\Cl'(\mathcal{A}_{L/K})^{\mathcal J}$, where
$\Cl'(\mathcal{A}_{L/K})$ is defined analogously to
$\Cl'(\mathcal{O}_K[C_p])$ and ${\mathcal J}$ is again the
Stickelberger ideal in ${\mathbb Z}[\Delta]$. Moreover, there is a
surjective ${\Delta}$-homomorphism
\begin{equation}\label{surj-map}
f: \Cl'(\mathcal{O}_K[C_p])^{\mathcal J}\longrightarrow \Cl'(\mathcal{A}_{L/K})^{\mathcal J}.
\end{equation}

\begin{proof}[Proof of Proposition \ref{equivalence}]
If $K$ satisfies the modified Leopoldt condition of type $C_p$, then
it is clear that (a) and (b) hold once one recalls that
${\tr}_{L/K}(\OL)=\OK$, and that $\mathcal{A}_{L/K} =
\mathcal{O}_{K}[C_{p}]$, if and only if $L/K$ is a tame $C_{p}$-Galois
extension.

Suppose conversely that (a) and (b) both hold.  Since $K$ is a
Hilbert-Speiser field of type $C_p$, the subgroup
$\Cl'(\mathcal{O}_K[C_p])^{\mathcal J}$ is trivial by \cite[Theorem,
p.103]{McCulloh}. Now let $L/K$ be any wildly ramified Galois
extension with Galois group isomorphic to $C_p$. Since $\mathcal{O}_L$
is a locally free $\mathcal{A}_{L/K}$-module, it follows from the
discussion above that the class $\mathrm{cl}(\OL)=({\frak
b}^{-1}\mathcal{O}_L)$ lies in $\Cl'(\mathcal{A}_{L/K})^{\mathcal
J}$. However, $\Cl'(\mathcal{A}_{L/K})^{\mathcal J}$ is trivial by
(\ref{surj-map}), so ${\frak b}^{-1}\mathcal{O}_L$ is free over
$\mathcal{A}_{L/K}$. 
\end{proof}

By \cite[Theorem 2]{carter-arch}, we
know that a Hilbert-Speiser field $K$ of type $C_p$ must have
class number $h_K=1$ if either $p=2$, or $p=3$ and $K$ contains a primitive
cube root of unity $\zeta_3$. The condition $\zeta_3 \in K$ can be
removed by Lemma \ref{hK} below. But if $h_K=1$ then HS-L$(K,C_p)$ and
mHS-L$(K,C_p)$ coincide, so we obtain the following
additional corollary to Theorem \ref{maintheorem}.

\begin{corollary} \label{HSL23}
Let $p=2$ or $3$, and let $K$ be any number
  field. Then  HS-L$(K,C_{p})$ holds. 
\end{corollary}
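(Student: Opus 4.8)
The plan is to reduce HS-L$(K,C_p)$ to mHS-L$(K,C_p)$, which is already known for $p\in\{2,3\}$ by Theorem \ref{maintheorem}(a), using the extra information that a Hilbert-Speiser field of type $C_2$ or $C_3$ has class number $1$. As noted in the introduction, a Leopoldt field of type $C_p$ is automatically Hilbert-Speiser of type $C_p$ (because $\mathcal{A}_{L/K}=\OK[C_p]$ for tame $L/K$), so the assertion HS-L$(K,C_p)$ is equivalent to the single implication: \emph{if $K$ is Hilbert-Speiser of type $C_p$, then $K$ is Leopoldt of type $C_p$}. Thus I may assume $K$ is Hilbert-Speiser of type $C_p$ and must show that $\OL$ is free over $\mathcal{A}_{L/K}$ for every $C_p$-Galois extension $L/K$.

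First I would invoke \cite[Theorem 2]{carter-arch}, which forces $h_K=1$ when $p=2$, or when $p=3$ and $\zeta_3\in K$; the residual hypothesis $\zeta_3\in K$ in the case $p=3$ is removed by Lemma \ref{hK}. Hence $h_K=1$, so every $\OK$-ideal is principal; in particular $\tr_{L/K}(\OL)$ is principal for each $C_p$-Galois extension $L/K$. By the observation recorded after Conjecture \ref{modified conjecture}, principality of $\tr_{L/K}(\OL)$ gives an isomorphism $\tr_{L/K}(\OL)^{-1}\OL\cong\OL$ of $\mathcal{A}_{L/K}$-modules. Since $p\in\{2,3\}$ satisfies condition (a) of Theorem \ref{maintheorem}, mHS-L$(K,C_p)$ holds, so $\tr_{L/K}(\OL)^{-1}\OL$ is free over $\mathcal{A}_{L/K}$; therefore $\OL$ is free over $\mathcal{A}_{L/K}$ as well. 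This holds for every $C_p$-Galois extension $L/K$, so $K$ is Leopoldt of type $C_p$, and HS-L$(K,C_p)$ follows.

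The substantive input, and the only real obstacle, is the class-number bound from \cite{carter-arch} (and its extension via Lemma \ref{hK}): it is precisely this that is unavailable for primes $p\geq 5$, and without it the $\OK$-ideal $\tr_{L/K}(\OL)$ could be non-principal, leaving a genuine gap between mHS-L$(K,C_p)$ and HS-L$(K,C_p)$. Everything else in the argument is formal bookkeeping with the definitions and a direct appeal to Theorem \ref{maintheorem}.
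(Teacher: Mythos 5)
Your argument is correct and is essentially identical to the paper's own proof: the paper likewise deduces $h_K=1$ from \cite[Theorem 2]{carter-arch} together with Lemma \ref{hK}, observes that HS-L$(K,C_p)$ and mHS-L$(K,C_p)$ coincide when $h_K=1$ (via principality of $\tr_{L/K}(\OL)$), and then applies Theorem \ref{maintheorem}(a). Your write-up merely spells out the bookkeeping in slightly more detail.
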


We will see in \S\ref{section} below that there exist
Hilbert-Speiser fields of type $C_5$ having class number 2.

\begin{lemma}  \label{hK} 
If $K$ is Hilbert-Speiser of type $C_3$ and $\zeta_3 \not \in K$, then $h_K=1$.
\end{lemma}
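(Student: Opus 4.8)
The strategy is to run McCulloh's classification of realisable classes (as recalled in \S\ref{realisable}) backwards: the Hilbert--Speiser hypothesis forces a single locally free class group to vanish, and from that vanishing I extract the class number of $K$ itself by a conductor square together with classical genus theory.

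First I would set $K':=K(\zeta_3)$. Since $\zeta_3\notin K$ we have $[K':K]=2$, and $K'/K$ is ramified only at primes above $3$ and at the infinite places, always with ramification index $2$; as $\gcd(2,3)=1$ this extension is everywhere tamely ramified. By McCulloh's theorem the assumption that $K$ is Hilbert--Speiser of type $C_3$ is equivalent to $\Cl'(\OK[C_3])^{\mathcal J}=1$. The key point for $p=3$ is that the Stickelberger ideal $\mathcal J\subseteq\Z[\Delta]$ (with $\Delta\cong C_2$) is the whole ring $\Z[\Delta]$: this comes from a direct computation with the Stickelberger element $\theta=\tfrac13(1+2\sigma)$ (equivalently, from $\Q(\zeta_3)=\Q(\sqrt{-3})$ having class number $1$), exactly as in the proof of \cite[Theorem 1.2]{carter}, and it does not use $\zeta_3\in K$. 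Hence $\Cl'(\OK[C_3])=1$.

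Next I would analyse $\Cl'(\OK[C_3])$ by means of the Milnor square coming from $t^3-1=(t-1)(t^2+t+1)$, namely $\OK[C_3]=\OK\times_{\OK/3\OK}\OK[\zeta_3]$. The associated Mayer--Vietoris sequence, together with the fact that the augmentation map corresponds to projection onto the $\OK$-factor, yields a short exact sequence
\[
0\longrightarrow U\longrightarrow \Cl'(\OK[C_3])\longrightarrow \Pic(\OK[\zeta_3])\longrightarrow 0,
\]
where $U$ is the cokernel of the reduction map $\OK^{\times}\times\OK[\zeta_3]^{\times}\to(\OK/3\OK)^{\times}$. Thus $\Cl'(\OK[C_3])=1$ is equivalent to the two statements: (i) $\Pic(\OK[\zeta_3])=1$; and (ii) $(\OK/3\OK)^{\times}$ is generated by the images of $\OK^{\times}$ and $\OK[\zeta_3]^{\times}$. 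Since extension of ideals gives a surjection $\Pic(\OK[\zeta_3])\twoheadrightarrow\Cl(\mathcal{O}_{K'})$, statement (i) already gives $h_{K'}=1$. (Merely quoting \cite[Theorem 2]{carter-arch} for $K'$ will not suffice here, both because $K$ Hilbert--Speiser of type $C_3$ does not obviously make $K'$ Hilbert--Speiser, and because $h_{K'}=1$ alone does not force $h_K=1$; one must keep (ii).)

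Finally I would deduce $h_K=1$. Applying Chevalley's ambiguous class number formula to the quadratic extension $K'/K$ gives
\[
\bigl|\Cl(\mathcal{O}_{K'})^{\Gal(K'/K)}\bigr|=\frac{h_K\cdot\textstyle\prod_{v}e_v}{2\cdot q},\qquad q:=[\,\OK^{\times}:\OK^{\times}\cap N_{K'/K}\mathbb{I}_{K'}\,]\le 2,
\]
the product being over the places $v$ of $K$ ramified in $K'/K$. As $h_{K'}=1$ the left side is $1$, so $h_K\prod_v e_v=2q$; hence $h_K$ is a power of $2$ with $h_K\le 4$, and $h_K=1$ unless $K'/K$ is unramified at all finite and infinite places. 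In that remaining case the infinite places are all complex and the finite norms are surjective on units, so $q=1$ and $h_K=2$; here I would invoke statement (ii), showing that if $h_K>1$ then the images of $\OK^{\times}$ and $\OK[\zeta_3]^{\times}$ cannot fill out $(\OK/3\OK)^{\times}$, so $U\ne 1$, a contradiction. The main obstacle is precisely this last step: converting the ``defect'' measured by $\Pic(\OK[\zeta_3])$ and $U$ back into information about $\Cl(\OK)$. Genus theory only bounds $h_K$; ruling out the surviving possibility (which is exactly the situation in which $K(\zeta_3)$ is the Hilbert class field of $K$, e.g.\ $K=\Q(\sqrt{-15})$) requires a careful comparison of the index of the images of the unit groups in $(\OK/3\OK)^{\times}$ with the unit index $q$ of Chevalley's formula, and an explicit description of $\OK[\zeta_3]^{\times}$ inside $\mathcal{O}_{K'}^{\times}$ at the primes above $3$.
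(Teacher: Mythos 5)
Your first half is fine and is in effect a proof of the input that the paper simply quotes: the observation that the Stickelberger ideal is all of $\Z[\Delta]$ for $p=3$, so that the Hilbert--Speiser hypothesis forces $\Cl'(\OK[C_3])=1$, and then the conductor square $\OK[C_3]=\OK\times_{\OK/3\OK}\OK[\zeta_3]$ giving $\Pic(\OK[\zeta_3])=1$ and hence $h_{K(\zeta_3)}=1$, recovers what the paper cites directly from \cite[Theorem 2(ii)]{carter-arch}. The problems are in the passage from $h_{K(\zeta_3)}=1$ to $h_K=1$. Your Chevalley step does not work as written: the bound $q\le 2$ is unjustified (the unit--norm index only embeds into a product of groups of order $2$ indexed by the ramified places, so it can be as large as $2^{r}$), and even accepting it, the identity $h_K\prod_v e_v=2q$ with a single ramified place and $q=2$ is consistent with $h_K=2$. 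So your dichotomy ``$h_K=1$ unless $K(\zeta_3)/K$ is everywhere unramified'' is a non sequitur. The correct and much shorter argument in the ramified case, which the paper uses, is that the norm $\Cl(\OM)\rightarrow\Cl(\OK)$ is surjective whenever $M/K$ is ramified at some finite or infinite place \cite[Theorem 10.1]{Was}, whence $h_K\mid h_M=1$.

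The more serious gap is the everywhere-unramified case, which you correctly isolate as the crux but then leave open, describing it only as ``the main obstacle''. The paper closes it by a different mechanism: if $M=K(\zeta_3)$ is unramified over $K$ everywhere, then $K$ is totally imaginary and every prime $\frakp$ of $K$ above $3$ has $e_\frakp\ge 2$; Herreng's formula \cite[Proposition 3.2]{herreng} for $d_3\bigl((\OK/3\OK)^\times\bigr)$ then shows this $3$-rank strictly exceeds $d_3(\OK^\times)=\tfrac12[K:\Q]-1$, so $(\OK/3\OK)^\times/\mathrm{im}(\OK^\times)$ has exponent divisible by $3$ and $K$ is not Hilbert--Speiser of type $C_3$ by \cite[Theorem 1]{grrs} --- i.e.\ the bad case simply cannot occur under the hypothesis. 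Note also that the group $U$ you propose to show nontrivial quotients $(\OK/3\OK)^\times$ by the images of \emph{both} $\OK^\times$ and $\OK[\zeta_3]^\times$, so proving $U\neq 1$ is strictly harder than the rank count against $\OK^\times$ alone that the paper performs; as it stands your proposal does not prove the lemma.
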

\begin{proof}
Let $M=K(\zeta_3)$. Then $h_M=1$ by \cite[Theorem 2(ii)]{carter-arch}. If the quadratic extension
$M/K$ is ramified (either at a prime above 3 or at an infinite prime)
then the norm $\Cl(\OM) \longrightarrow \Cl(\OK)$ is surjective \cite[Theorem
10.1]{Was}, so $h_K=1$. If $M/K$ is everywhere unramified, then $K$ is
totally imaginary and each prime $\frakp$ of $K$ above $3$ has absolute
ramification index $e_\frakp \geq 2$. We claim that in this case $K$
cannot be Hilbert-Speiser of type $C_3$. 

To prove the claim, we apply Herreng's formula \cite[Proposition 3.2]{herreng}
for the 3-rank $d_3( (\OK/3\OK)^\times)$ of the unit group of the residue ring
$\OK/3\OK$. Writing $f_\frakp$ for the inertia degree of $\frakp$, this yields 
$$
 d_3( (\OK/3\OK)^\times) 
   =  \sum_{\frakp | 3} f_\frakp \left( e_\frakp - 1 - 
   \left\lceil \frac{e_\frakp-3}{3}   \right\rceil \right). 
$$
To show $K$ is not Hilbert-Speiser of type $C_3$, it suffices by
\cite[Theorem 1]{grrs} to show that 
$V_3(K):=(\OK/3\OK)^\times / \mathrm{im}(\OK^\times)$ has exponent divisible by
3. This will certainly hold if 
\begin{equation} \label{d3rank}
    d_3( (\OK/3\OK)^\times) > d_3(\OK^\times) = \frac{1}{2}[K:\Q] - 1.
\end{equation}
(The equality holds since $K$ is totally imaginary and $\zeta_3 \not
\in K$.) But we calculate
\begin{eqnarray*}
\lefteqn{2\biggl( d_3( (\OK/3\OK)^\times)- d_3(\OK^\times) \biggr)  } \\
  & = & 2 \sum_\frakp f_\frakp \left( e_\frakp - 1 - \left\lceil
  \frac{e_\frakp-3}{3} \right\rceil \right) -  \left( \sum_\frakp f_\frakp
  e_\frakp -2 \right) \\    
   & = &  \sum_\frakp f_\frakp \left( e_\frakp-2 -2 \left \lceil
  \frac{e_\frakp-3}{3}  \right \rceil \right)  + 2 \\
 & \geq & 2,  
\end{eqnarray*}
since $e_\frakp \geq 2$ for each $\frakp$. Hence (\ref{d3rank})
  holds, as required.
\end{proof}

\section{Local Freeness for $C_p$-Galois extensions}

Let $p$ be prime and let $N/M$ be a wildly ramified Galois extension of $p$-adic fields of degree $p$. In this section, we briefly review the results of \cite{bf} and \cite{bbf}, which give necessary and sufficient conditions for $\mathcal{O}_{N}$ to be free over $\mathcal{A}_{N/M}$ in this case.

Let $e$ denote the ramification index of $M/\Q_{p}$ and let $t$ denote the ramification number of $N/M$.  From \cite[Chapter IV, \textsection 2, Exercise 3]{serre} and the assumption that 
$N/M$ is wildly ramified, we have $1 \leq t \leq \frac{pe}{p-1}$. 
Define $n$ to be the ``length'' of the continued fraction expansion
\[ 
\frac{t}{p} = a_{0} + \frac{1}{a_{1}+ \frac{1}{a_{2 + \ldots}} } = [a_{0}, a_{1}, a_{2}, \ldots, a_{n}],
\quad \textrm{ with } a_{n}>1.
\]
Let $a$ be the unique integer such that $0 \leq a \leq p-1$ and $a \equiv t \, \mod \, p$
(and so $t=a_{0}p+a$). 

\begin{theorem}[\cite{bf}, \cite{bbf}]\label{local-criteria}
The ring of integers $\mathcal{O}_{N}$ is free over $\mathcal{A}_{N/M}$ if and only if
\begin{enumerate}
\item $a=0$, or
\item $t < \frac{pe}{p-1}-1$ and $a \mid (p-1)$, or
\item $\frac{pe}{p-1}-1 \leq t$ and $n \leq 4$.
\end{enumerate}
\end{theorem}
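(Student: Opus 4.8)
The argument, due to \cite{bf} and \cite{bbf}, runs as follows. Write $G=\langle\sigma\rangle$, set $\theta=\sigma-1\in M[G]$, and fix a uniformizer $\pi_N$ of $N$. The first step is to compute $\mathcal{A}_{N/M}$ explicitly. Since $\{1,\pi_N,\dots,\pi_N^{p-1}\}$ is an $\mathcal{O}_M$-basis of $\mathcal{O}_N$, the order $\mathcal{A}_{N/M}$ is determined by the $N$-adic valuations of the iterates $\theta^i\pi_N^j$, and one finds that $\mathcal{A}_{N/M}=\bigoplus_{i=0}^{p-1}\pi_M^{-c_i}\theta^i\mathcal{O}_M$ with $c_i=\lfloor(\min_j v_N(\theta^i\pi_N^j))/p\rfloor$. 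The key mechanism here is that applying $\theta$ raises the $N$-adic valuation of an element by exactly $t$ so long as that valuation is prime to $p$, but by a much larger amount (a ``surge'') once an iterate lies in $\pi_M\mathcal{O}_N$; tracking when these surges first occur is a matter of elementary arithmetic modulo $p$, and the resulting pattern of the $c_i$ is controlled by the Euclidean algorithm applied to $(t,p)$, equivalently by the continued fraction expansion of $t/p$. This is how the invariants $a$ and $n$ enter the picture.

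The second step reduces freeness to a combinatorial statement. As $\mathcal{O}_N$ and $\mathcal{A}_{N/M}$ are both free $\mathcal{O}_M$-modules of rank $p$, $\mathcal{O}_N$ is $\mathcal{A}_{N/M}$-free if and only if there is an $x\in\mathcal{O}_N$ whose translates $x,\pi_M^{-c_1}\theta x,\dots,\pi_M^{-c_{p-1}}\theta^{p-1}x$ form an $\mathcal{O}_M$-basis of $\mathcal{O}_N$; writing $x=\sum_j x_j\pi_N^j$ and computing valuations, this amounts to being able to choose the support and leading terms of $x$ so that the $N$-adic valuations of these $p$ translates are precisely $0,1,\dots,p-1$. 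Using the description of $\mathcal{A}_{N/M}$ from the first step, this is a purely numerical feasibility question in $t$, $p$ and $e$.

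The third step is the trichotomy. If $a=0$ (equivalently $p\mid t$) one is in the maximally ramified case $t=pe/(p-1)$, which forces $(p-1)\mid e$: here the surges do not disturb the relevant range, $\mathcal{A}_{N/M}$ is a hereditary order, and a generator always exists, giving clause~(a). If $t<pe/(p-1)-1$, the surges occur away from the ramification boundary, and the feasibility question reduces to whether the residues $it\bmod p$ can be matched up appropriately, which is possible exactly when $a\mid(p-1)$; this is clause~(b), and is essentially the content of \cite{bf}. Finally, if $pe/(p-1)-1\le t$ one is in the delicate ``near-maximal ramification'' regime of \cite{bbf}: here the surge structure interacts with the bound $t\le pe/(p-1)$, and an induction on the convergents of the continued fraction of $t/p$ shows that a generator can be assembled exactly when $n\le4$, whereas for $n\ge5$ a genuine obstruction appears and $\mathcal{O}_N$ fails to be free.

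The main obstacle is clause~(c). The threshold $n\le4$ is invisible from the coarse module structure and emerges only from the intricate bookkeeping of the surges against the ramification bound; the only known proof is the explicit continued-fraction induction of Bertrandias--Bertrandias--Ferton. By contrast, the computation of $\mathcal{A}_{N/M}$, the reduction step, and clauses~(a) and~(b) rest only on standard valuation estimates.
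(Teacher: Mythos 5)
This theorem is not proved in the paper at all: it is imported verbatim from Bertrandias--Ferton \cite{bf} and Bertrandias--Bertrandias--Ferton \cite{bbf}, and the surrounding section explicitly says it is only ``briefly reviewing'' those results. So there is no internal proof to compare against. Judged on its own terms, your text is a reasonable roadmap of how the cited references proceed: compute $\mathcal{A}_{N/M}$ from the valuations $v_N(\theta^i\pi_N^j)$ using the fact that $\theta$ shifts valuations prime to $p$ by exactly $t$, reduce freeness to a numerical feasibility statement about hitting the valuations $0,1,\dots,p-1$, and then split into the three regimes. Your attributions of clauses (b) and (c) to \cite{bf} and \cite{bbf} respectively are correct, as is the observation that $a=0$ forces $t=pe/(p-1)$.

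As a proof, however, the proposal has genuine gaps. First, the claim that $\mathcal{A}_{N/M}=\bigoplus_{i}\pi_M^{-c_i}\theta^i\mathcal{O}_M$ is itself a nontrivial theorem: each monomial $\pi_M^{-c_i}\theta^i$ visibly lies in the associated order, but one must also exclude elements of the form $\pi_M^{-c}\left(\theta^i+u\theta^{i'}+\cdots\right)$ with $c$ strictly larger than all the relevant $c_j$, and that requires an argument rather than ``standard valuation estimates''. Second, and more seriously, each clause of the trichotomy is asserted rather than derived: you state that the residue-matching ``is possible exactly when $a\mid(p-1)$'' and that an induction on convergents ``shows that a generator can be assembled exactly when $n\le 4$'', but neither the explicit generators for the ``if'' directions nor the obstructions for the ``only if'' directions (in particular, why $n\ge 5$ destroys freeness near the ramification bound) are exhibited. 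You yourself flag clause (c) as resting entirely on the computation in \cite{bbf}; at that point the proposal has become a citation with commentary, which is precisely the status the statement already has in the paper.
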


\begin{corollary}\label{3-corollary}
If $p=3$, then $\mathcal{O}_{N}$ is always free over $\mathcal{A}_{N/M}$.
\end{corollary}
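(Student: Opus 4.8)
The plan is to obtain Corollary \ref{3-corollary} as an immediate consequence of Theorem \ref{local-criteria}: it suffices to check that, when $p=3$, at least one of the conditions (a), (b), (c) always holds, for every admissible ramification number $t$ (recall $1\le t\le\frac{pe}{p-1}$). Since $0\le a\le p-1=2$, there are only the three possibilities $a\in\{0,1,2\}$, and the whole argument is a short case analysis over these.

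First I would dispose of $a=0$: this is precisely condition (a), so $\mathcal{O}_N$ is free. So assume $a\in\{1,2\}$; then $a$ divides $p-1=2$. If in addition $t<\frac{pe}{p-1}-1=\frac{3e}{2}-1$, then condition (b) holds and we are done. The only remaining case is $a\in\{1,2\}$ together with $t\geq\frac{3e}{2}-1$, and here I would invoke condition (c), which requires bounding the length $n$ of the continued fraction expansion of $t/p=t/3$.

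The one point worth isolating is the elementary fact that a rational number with denominator $3$ has an extremely short continued fraction. Writing $t=3a_0+a$ with $a_0=\lfloor t/3\rfloor$, one has $t/3=[a_0,3]$ when $a=1$ and $t/3=[a_0,1,2]$ when $a=2$, these being the expansions whose last partial quotient exceeds $1$. In either case $n\le 2\le 4$, so condition (c) of Theorem \ref{local-criteria} is satisfied whenever $\frac{3e}{2}-1\le t$. Since the three cases --- $a=0$; $a\neq0$ with $t<\frac{3e}{2}-1$; and $a\neq0$ with $t\geq\frac{3e}{2}-1$ --- exhaust all possibilities, this proves that $\mathcal{O}_N$ is always free over $\mathcal{A}_{N/M}$.

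I do not expect a genuine obstacle: the statement reduces to a three-way case check, and the only step that calls for any thought is the observation that a rational with denominator $3$ has continued fraction length at most $2$, which is exactly what makes condition (c) automatic in the "large $t$" regime where it is the relevant criterion.
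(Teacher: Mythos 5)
Your proposal is correct and follows essentially the same route as the paper's proof: case $a=0$ via condition (a), then for $a\in\{1,2\}$ noting $a\mid p-1$ for condition (b) and computing $t/3=[a_0,3]$ or $[a_0,1,2]$ so that $n\le 2$ for condition (c). There is nothing to add.
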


\begin{proof}
Since $p=3$, we must have $a \in \{ 0,1,2 \}$.
The case $a=0$ is clear, so suppose $a \in \{ 1,2 \}$.
Then $a$ divides $2=p-1$ and either 
\[ 
\frac{t}{p} = a_{0} + \frac{1}{3} 
\quad \textrm{ or } \quad 
\frac{t}{p} = a_{0} + \frac{2}{3} = a_{0} + \frac{1}{1 + \frac{1}{2}},
\]
so $n \leq 2$. Since either $t < \frac{pe}{p-1}-1$ or $\frac{pe}{p-1}-1 \leq t$,
this completes the proof.
\end{proof}

We end this section with a global consequence of Theorem
\ref{local-criteria}.

\begin{corollary}\label{bp}
Let $p \geq 5$ be prime, and let $b(p)$ be the least positive
integer not dividing $p-1$. Let $K$ be a number field such that some
prime $\frakp$ of $K$ above $p$ has absolute ramification index $e >
b(p)$. Then there is a $C_p$-Galois extension $L$ of $K$ such that
$\OL$ is \emph{not} locally free over $\mathcal{A}_{L/K}$.
\end{corollary}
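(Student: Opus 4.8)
The plan is to reduce to the local freeness criterion of Theorem \ref{local-criteria}: I would build a $C_p$-Galois extension $L/K$ in which $\frakp$ is totally (hence wildly) ramified with ramification number exactly $t=b(p)$, and then check that for the completed extension none of the conditions (a), (b), (c) of Theorem \ref{local-criteria} can hold. First I would record the elementary properties of $b(p)$ that are needed. Since $p$ is odd we have $2 \mid (p-1)$, so $b(p) \geq 3$; and since $p-2 \geq 3$ does not divide $p-1 = (p-2)+1$, we get $3 \leq b(p) \leq p-2$. In particular $1 \leq b(p) \leq p-1$, so $b(p)$ is coprime to $p$ and equals its own least non-negative residue modulo $p$; thus in the notation preceding Theorem \ref{local-criteria} we will have $a = b(p)$. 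Finally, from the hypothesis $e \geq b(p)+1$ a short computation gives $\frac{pe}{p-1}-1 \geq \frac{p\,b(p)+1}{p-1} > b(p)$, so that $t:=b(p)$ satisfies $1 \leq t < \frac{pe}{p-1}-1$; in particular $t$ is an admissible ramification number (coprime to $p$ and in the range $1 \leq t \leq \frac{pe}{p-1}$) which lies strictly below $\frac{pe}{p-1}-1$.

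Next I would produce the extension. Set $M = K_\frakp$, a $p$-adic field with absolute ramification index $e$. By the classical theory of cyclic degree-$p$ extensions of $p$-adic fields (cf.\ \cite[Chapter IV]{serre}), for every integer $t$ with $1 \leq t \leq \frac{pe}{p-1}$ and $p \nmid t$ there is a totally ramified $C_p$-extension of $M$ with ramification number $t$; apply this with $t=b(p)$ to obtain such an extension $N/M$. Since $p$ is odd, there is no Grunwald--Wang obstruction, so by class field theory there exists a $C_p$-Galois extension $L/K$ whose completion at $\frakp$ is the chosen $N/M$; in particular $\frakp$ is totally ramified in $L/K$ with ramification number $b(p)$. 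Let $\Frakp$ be the (unique) prime of $L$ above $\frakp$, so that $L_\Frakp = N$. As in the proof of Theorem \ref{C2C2fail}, the totality of the ramification at $\frakp$ gives $(\mathcal{A}_{L/K})_\frakp = \mathcal{A}_{N/M}$ and $(\OL)_\frakp = \ON$ after completing at $\frakp$; hence if $\OL$ were locally free over $\mathcal{A}_{L/K}$, then $\ON$ would be free over $\mathcal{A}_{N/M}$.

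It remains to check that $\ON$ is \emph{not} free over $\mathcal{A}_{N/M}$, which now follows directly from Theorem \ref{local-criteria}: condition (a), $a=0$, fails because $a=b(p)\geq 3$; condition (b) fails because $a=b(p)$ does not divide $p-1$ by the definition of $b(p)$ (the other half of (b), namely $t < \frac{pe}{p-1}-1$, does hold); and condition (c) fails because it requires $\frac{pe}{p-1}-1 \leq t$, whereas we arranged $t=b(p) < \frac{pe}{p-1}-1$. Since none of (a), (b), (c) holds, $\ON$ is not free over $\mathcal{A}_{N/M}$, and therefore $\OL$ is not locally free over $\mathcal{A}_{L/K}$, as claimed. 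The only step requiring genuine care is the construction: one must know that a totally ramified $C_p$-extension of $M$ with the prescribed ramification number exists, and that this local extension can be realised inside a global $C_p$-extension of $K$; once this is granted, the rest is a direct reading of Theorem \ref{local-criteria}.
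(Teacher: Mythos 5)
Your proposal is correct and follows essentially the same route as the paper: construct a totally ramified local $C_p$-extension of $K_\frakp$ with ramification number $t=b(p)$, globalise via Grunwald--Wang (no special case since $p$ is odd), and rule out conditions (a), (b), (c) of Theorem \ref{local-criteria} using $b(p)\nmid(p-1)$ and the inequality $t<\frac{pe}{p-1}-1$ forced by $e\geq b(p)+1$. The only cosmetic difference is that the paper cites \cite[Chapter III, (2.5) Proposition]{FV} for the existence of the local extension with prescribed ramification number, and your verification that none of (a)--(c) holds is spelled out slightly more explicitly than the paper's.
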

\begin{proof}
Let $M=K_\frakp$ be the completion of $K$ at $\frakp$. For any integer
$t$ such that $0 < t < pe/(p-1)$ and $t \not \equiv 0 \pmod{p}$, there
is a (totally ramified) $C_p$-Galois extension $N$ of $M$ with ramification
number $t$: see for instance \cite[Chapter III, (2.5) Proposition]{FV}. We
take $t=b(p)$. As $p \geq 5$, we have $0<t<p-1$, so that $t \not
\equiv 0 \pmod{p}$. Also, $t<\frac{pe}{p-1} - 1$ since $e \geq t+1$ by
hypothesis. Thus there exists a Galois $C_p$-extension $N$ of $M$ with
ramification number $t$, and moreover $\ON$ is not free over
$\mathcal{A}_{N/M}$ by Theorem \ref{local-criteria} and the definition
of $b(p)$.

Having found the $p$-adic extension $N/M$, we observe
that there exists a $C_p$-Galois extension $L$ of the number
field $K$ such that $L_\frakp =N$: this follows from the
Grunwald-Wang theorem (see for example \cite[(9.2.3) Corollary]{NSW}; note
we are not in the ``special case'' since $p$ is odd). Then $\OL$ is
not locally free over $\mathcal{A}_{L/K}$, since $\ON$ is not free
over $\mathcal{A}_{N/M}$. 
\end{proof}

\section{Proof of Theorem \ref{maintheorem}}

We now combine the results of previous sections and of \cite{carter}, \cite{herreng}
to prove our main result.

\begin{proof}[Proof of Theorem \ref{maintheorem}]
Assume that $K$ is a Hilbert-Speiser field of type $C_p$. 
By Proposition \ref{equivalence}, it suffices to show that if $L/K$ is a wildly ramified 
$C_{p}$-Galois extension, then $\mathcal{O}_L$ is a locally free $\mathcal{A}_{L/K}$-module. To this end let $M=K_{\mathfrak{p}}$ be the completion of $K$ at some prime 
$\mathfrak{p}$ above $p$ and let $e$ be the ramification index of $M/\Q_{p}$.
Let $N/M$ be a $C_{p}$-Galois extension and assume that $N/M$ is (wildly) ramified. 
Let $t$ be the ramification number of $N/M$.  
From \cite[Chapter IV, \textsection 2, Exercise 3]{serre} and the assumption that 
$N/M$ is wildly ramified, we have $1 \leq t \leq \frac{pe}{p-1}$. 

We always have $\zeta_{2}=-1 \in K$, so the case $p=2$ is given by 
\cite[Theorem 1.2]{carter}. The case $p=3$ follows from Corollary \ref{3-corollary}.
So we may henceforth assume that $p \geq 5$.

Suppose that $K/\Q$ is unramified at $p$. 
Then $e=1$ and so the above inequality becomes $1\leq t \leq \frac{p}{p-1}$, 
which forces $a=t=1$. It now follows from 
Theorem \ref{local-criteria} (c) that $\mathcal{O}_N$ is a free $\mathcal{A}_{N/M}$-module.
So we may henceforth assume that $e \geq 2$.

Suppose that $K$ is totally real. By Theorem \ref{real-not-HS}, we are reduced to the
case $p=5$ and $e=2$. We must have $a=t \in \{ 1,2 \}$. If $t=1$, the result follows
from Theorem \ref{local-criteria} (b); if $t=2$, it follows from Theorem \ref{local-criteria} (c).

Suppose that $K$ is (totally) imaginary and $K/\Q$ is Galois.
By the end of the proof of \cite[Proposition 3.4]{herreng}, we
must have $e \leq \frac{2p}{p-2}$, since otherwise $K$ would not be 
Hilbert-Speiser of type $C_{p}$. The inequalities $2 \leq e \leq \frac{2p}{p-2}$
and $1\leq t \leq \frac{pe}{p-1}$ now leave several cases to consider.
Since $p \geq 5$, we have $\frac{2p}{p-2}< 4$, so  $e \in \{2,3\}$. If 
$e=2$ then $a=t \in \{1,2\}$. If $t=1$, the result follows from Theorem \ref{local-criteria} (b); 
if $t=2$, it follows from Theorem \ref{local-criteria} (c) since
\[
\frac{t}{p} = \frac{2}{p} = \frac{1}{\left( \frac{p-1}{2} \right)+ \frac{1}{2}}.
\]
Now assume $e=3$. 
In this case we find $a=t \in \{1,2,3\}$. When $t \in \{1,2\}$ the result follows from Theorem \ref{local-criteria} (b). If $t=3$ then either
\[
\frac{t}{p} = \frac{3}{p} = \frac{1}{\left( \frac{p-1}{3} \right)+ \frac{1}{3}}
\quad
\textrm{ or }
\quad
\frac{t}{p} = \frac{3}{p} = \frac{1}{\left( \frac{p-2}{3} \right)+ \frac{1}{1 + \frac{1}{2}}},
\]
and so the result follows from Theorem \ref{local-criteria} (c).
\end{proof}

\section{Counterexamples for $C_5$}

Most of this section is devoted to providing a counterexample to 
mHS-L$(K,C_5)$ in the case that $K/\Q$ is not Galois. In this
counterexample, mHS-L$(K,C_5)$ fails locally. 
In the last part we give some examples of fields $K$ such that
mHS-L$(K,C_5)$ holds, but HS-L$(K,C_5)$ does not; the failure of
HS-L$(K,C_5)$ is then necessarily a genuinely global phenomenon. 

The counterexample will be a sextic field $K$ over the rationals with
signature $(2,2)$. The defining
polynomial is $x^6+2x^4-5x-5$.
 The field $K$ has class
number 1, and the prime 5 splits in $K$ as
the fourth power of a degree one prime $\frakp_1$ times a degree two  prime
$\frakp_2$. Applying Corollary \ref{bp}, and noting that $b(5)=3$, it
follows that $K$ has a $C_5$-Galois extension $L/K$ for which 
$\OL$ is not locally free over $\mathcal{A}_{L/K}$.

It is therefore left to prove that $K$ is Hilbert-Speiser of type $C_5$.
To do this, we must show that the subgroup of realisable classes
in the class group of $\OK[C_5]$ is trivial. 

Since 5 ramifies in $K$, the calculation of the class group of $\OK[C_5]$ is a little difficult,
involving two fibre products, but not as difficult as one might expect since some relevant
ray class groups are trivial and of order 2, respectively. The final outcome is that the
class group of $\OK[C_5]$ is of  order 1 or 2. This
implies, by an easy explicit argument, that $\Cl(\OK[C_5])$ is
annihilated by the Stickelberger ideal (note that the 
cyclic group Aut$(C_5)$ of order 4 has to act trivially). Therefore
$K$ is Hilbert-Speiser of type $C_5$, by McCulloh's theorem on
realisable classes in \cite{McCulloh}. In fact, with extra effort one can 
show that the order of the whole class group is 1; but it is much simpler to use
McCulloh's theorem.

 \subsection{The class group of a certain nonmaximal order $R$ in $M=K(\zeta_5)$}

Let $K = \Q(\theta)$ with $\theta^6+2\theta^4-5\theta-5=0$. 
We check using PARI \cite{PARI2} that $5\OK = \frakp_1^4\frakp_2$, where $\frakp_1$ has degree 1
and $\frakp_2$ has degree 2. We let $M=K(\zeta_5)$. Calculation shows
that $\frakp_1\OM$ is  again prime. 

\begin{remark} $M$ has very large degree (24),
but the polynomial defining $M$, afforded by the PARI command {\tt polcomposite}, is not
as unwieldy as one might expect. We then calculated {\tt bnfinit} of $M$, which
contains all the information we need on (ray) class groups of $M$.

Some remarks on the length and the reliability of our calculations:  The algorithm {\tt bnfinit} took seconds or minutes, depending on the choice of governing parameters.
We used the parameters $c=0.3$, $c2=12$, which yield a rigorous
result under the assumption GRH. (It seems illusory to
eliminate GRH for a field of this size, in particular
a call of {\tt bnfcertify} results in an instantaneous
refusal, because the Minkowski constant is much too large.) The inbuilt
check number was 1, as it should be.
Some more
plausibility checks were done, such as repeating the calculation with another
defining polynomial afforded by {\tt polred}, or on
different machines. 
 
Another partial justification of correctness
is as follows. All the units produced by \cite{PARI2}  were double-checked
(a quick way is to take the principal
ideal generated by a hypothetical unit and factor it; if
one gets the empty factorisation, we indeed have a unit). 
Now if we accept the statement that the class number $h_M$ of $M$ is 1 (produced by {\tt bnfinit}) as true, then the triviality of  a certain ray class group (established below)
is rigorously true as well: 
The unlikely case
that PARI missed some units of $M$ would only mean 
that our number for the order of the ray class group
might be too high, but we already obtain order 1 using the
supply of units found by \cite{PARI2}. 
\end{remark}

The main task in this section is a comparison of the 
rings $R=\OK[\zeta_5] \cong \OK\otimes_{\Z}\Z[\zeta_5]$,
which is {\it not\/} the maximal order of $M$, on the one side,
and the ring $S=\OM$ on the other side. By \cite{PARI2} $S$ has
class number 1. We want to establish that $\Cl(R)$ is trivial as well.

For this we  need some analysis of the inclusion $R \subset S$.
Since $S$ disagrees with $R$ at most at primes which are ramified
both in $K$ and in $\Q(\zeta_5)$,
 the only prime at which we expect disagreement is $\frakp_1$. 
Let $\pi$ be a local parameter at $\frakp_1$ in $K$,
and let $\lambda=\zeta_5-1$.
Since 5 is (tamely) ramified with ramification index 4 both in $K_{\frakp_1}$ and 
in $\Q_{5}(\zeta_5)$, the rings
$\O_{M,\frakp_1}$ and $\Z_5[\pi,\lambda]$ cannot be equal. But one has the following
local description at $\frakp_1$:

\begin{lemma}  The element $\xi: = \lambda/\pi$ is integral. The ring $S_{\frakp_1}=\O_{M,\frakp_1}$ is
  the (free) $\O_{K,\frakp_1}$-span of $1,\xi,\xi^2, \xi^3$,
  and the length of $\O_{M,\frakp_1}/\Z_5[\pi,\lambda]$ is 6. 
  Moreover, $\pi^3 S_{\frakp_1}$ is contained in $R_{\frakp_1}=\Z_5[\pi,\lambda]$.
  \end{lemma}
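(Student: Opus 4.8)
The plan is to pass to the completion at $\frakp_1$, where the key observation is that $M_{\frakp_1}/K_{\frakp_1}$ is \emph{unramified} of degree $4$: indeed $[M:K]=4$ (since $[M:\Q]=24$ and $[K:\Q]=6$), and because $\frakp_1\OM$ is prime the unique prime $\Frakp$ of $M$ above $\frakp_1$ satisfies $e(\Frakp/\frakp_1)=1$, $f(\Frakp/\frakp_1)=4$. Hence $S_{\frakp_1}=\O_{M,\frakp_1}$ is free of rank $4$ over $\O_{K,\frakp_1}$, with $\frakp_1 S_{\frakp_1}=\Frakp S_{\frakp_1}$ and residue extension $\F_{5^4}/\F_5$; moreover $K_{\frakp_1}/\Q_5$ is totally (tamely) ramified of degree $4$, so $\O_{K,\frakp_1}=\Z_5[\pi]$ and $R_{\frakp_1}=\Z_5[\pi,\lambda]=\O_{K,\frakp_1}[\lambda]$. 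For the integrality of $\xi$: since $f(\frakp_1/5)=1$ and $e(\frakp_1/5)=4$ we have $v_{\Frakp}(5)=4$; from $\lambda^4=-5(\lambda^3+2\lambda^2+2\lambda+1)$, with $\lambda^3+2\lambda^2+2\lambda+1$ a unit at $\Frakp$, we get $v_{\Frakp}(\lambda)=1$, while $v_{\Frakp}(\pi)=e(\Frakp/\frakp_1)\,v_{\frakp_1}(\pi)=1$. Thus $\pi$ and $\lambda$ are both uniformizers of $M_{\frakp_1}$, so $\xi=\lambda/\pi$ is a unit of $S_{\frakp_1}$; in particular it is integral.

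For the basis statement: since $\xi$ differs from $\lambda$ by a unit of $K_{\frakp_1}$ it generates $M_{\frakp_1}=K_{\frakp_1}(\zeta_5)$ over $K_{\frakp_1}$, and, being integral, it satisfies a monic quartic over $\O_{K,\frakp_1}$; hence the $\O_{K,\frakp_1}$-module generated by $1,\xi,\xi^2,\xi^3$ equals $\O_{K,\frakp_1}[\xi]\subseteq S_{\frakp_1}$. By Nakayama's lemma, equality holds as soon as the image $\bar\xi$ of $\xi$ in $S_{\frakp_1}/\frakp_1 S_{\frakp_1}=\F_{5^4}$ generates $\F_{5^4}$ over $\F_5$. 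To verify this I would compute $\bar\xi^{\,4}$: the displayed relation gives $\lambda^4/5\equiv -1\pmod{\Frakp}$, and taking $\pi=\theta$ (which is legitimate: rewriting the defining equation as $\theta^4(\theta^2+2)=5(\theta+1)$ and using that $3$ is a non-square mod $5$ forces $\theta\in\frakp_1$ with $\theta+1,\theta^2+2$ units at $\frakp_1$, hence $v_{\frakp_1}(\theta)=1$) we obtain $\theta^4/5=(\theta+1)/(\theta^2+2)\equiv 1/2\equiv 3\pmod{\frakp_1}$. Therefore $\bar\xi^{\,4}=(-1)/3=3$ in $\F_5^\times$. Since $3$ is a primitive root modulo $5$, $X^4-3$ is irreducible over $\F_5$ (a root has multiplicative order $16$), so $\F_5(\bar\xi)=\F_{5^4}$, as required. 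The choice of $\pi$ is immaterial: replacing $\pi$ by $u\pi$ with $u\in\O_{K,\frakp_1}^\times$ replaces $\xi$ by $u\xi$ and, as $u^4\equiv 1\pmod{\frakp_1}$, changes neither the span nor $\bar\xi^{\,4}$.

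For the length and the inclusion: since $\lambda$ satisfies the monic (Eisenstein) polynomial $\Phi_5(X+1)=X^4+5X^3+10X^2+10X+5$, the ring $R_{\frakp_1}=\O_{K,\frakp_1}[\lambda]$ is the $\O_{K,\frakp_1}$-module generated by $1,\lambda,\lambda^2,\lambda^3$, that is, by $1,\pi\xi,\pi^2\xi^2,\pi^3\xi^3$. Matching this against the basis $1,\xi,\xi^2,\xi^3$ of $S_{\frakp_1}$ gives $R_{\frakp_1}=\bigoplus_{i=0}^{3}\pi^i\O_{K,\frakp_1}\xi^i$ inside $S_{\frakp_1}=\bigoplus_{i=0}^{3}\O_{K,\frakp_1}\xi^i$, so $S_{\frakp_1}/R_{\frakp_1}\cong\bigoplus_{i=1}^{3}\O_{K,\frakp_1}/\pi^i\O_{K,\frakp_1}$ has length $1+2+3=6$. (Equivalently, $\mathrm{disc}(\Phi_5)=5^3$ and the unramifiedness of $M_{\frakp_1}/K_{\frakp_1}$ give $[S_{\frakp_1}:R_{\frakp_1}]^2=5^3\O_{K,\frakp_1}=\frakp_1^{12}$.) Finally, $\pi^3\O_{K,\frakp_1}\xi^i\subseteq\pi^i\O_{K,\frakp_1}\xi^i$ for $0\le i\le 3$, whence $\pi^3 S_{\frakp_1}\subseteq R_{\frakp_1}$.

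The main obstacle is the spanning statement: everything else is formal bookkeeping with ramification indices and free modules over a discrete valuation ring, but the claim that $1,\xi,\xi^2,\xi^3$ generate $S_{\frakp_1}$ — equivalently that $\theta^4/5\bmod\frakp_1$ is a primitive root modulo $5$ — genuinely uses the defining polynomial of $K$, and I would double-check it with PARI. It is precisely the unit $\xi$, and not $\lambda$ itself, that generates the ramification-free degree-$4$ extension $M_{\frakp_1}/K_{\frakp_1}$ over $\O_{K,\frakp_1}$; had $\theta^4/5$ been a square (or a fourth power) mod $5$, $\bar\xi$ would lie in a proper subfield and the lemma would fail.
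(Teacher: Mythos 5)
Your proof is correct, and at the one genuinely nontrivial step it takes a different route from the paper's. Both arguments share the bookkeeping: $R_{\frakp_1}=\O_{K,\frakp_1}[\lambda]=\bigoplus_{i=0}^{3}\pi^i\O_{K,\frakp_1}\xi^i$ sits inside the span $T$ of $1,\xi,\xi^2,\xi^3$ with quotient of length $1+2+3=6$, and $\pi^3T\subseteq R_{\frakp_1}$. The difference is in proving $T=S_{\frakp_1}$. The paper compares discriminants: PARI supplies $\disc{\O_M}$, the tower formula (``Schachtelungsformel'') gives that of $R$, and the quotient shows $[S_{\frakp_1}:R_{\frakp_1}]=5^6=[T:R_{\frakp_1}]$, which together with $T\subseteq S_{\frakp_1}$ forces equality. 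You instead use that $M_{\frakp_1}/K_{\frakp_1}$ is unramified of degree $4$ and apply Nakayama, reducing everything to the residue-field computation $\bar\xi^{\,4}=(\lambda^4/5)\cdot(5/\theta^4)\equiv(-1)\cdot 3^{-1}=3$ in $\F_5$, a primitive root, so that $\bar\xi$ generates $\F_{5^4}$ over $\F_5$. Your identification of $\theta$ as a uniformizer at $\frakp_1$, via $\theta^4(\theta^2+2)=5(\theta+1)$ and the non-squareness of $3$ modulo $5$, is sound, and your parenthetical local discriminant count $[S_{\frakp_1}:R_{\frakp_1}]^2=(5^3)=\frakp_1^{12}$ is in effect a cleaner, purely local version of the paper's argument. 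What your route buys: it is hand-checkable without PARI's value of $\disc{\O_M}$, and it independently re-derives (rather than merely consumes) the paper's assertion that $\frakp_1\O_M$ is prime, since $\bar\xi$ having degree $4$ over $\F_5$ forces $f=4$ and hence inertness. One immaterial slip: replacing $\pi$ by $u\pi$ turns $\xi$ into $u^{-1}\xi$, not $u\xi$; your conclusion that the span and $\bar\xi^{\,4}$ are unchanged still holds because $\bar u^{\,4}=1$ in $\F_5^\times$.
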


\begin{proof}
The integrality of $\xi$ is clear, by looking at valuations. 
(Recall that $\pi$ is also a parameter for the extended ideal $\frakp_1\O_M$.)
The $\O_K$-module $T$ defined as the $\O_K$-span of $1,\xi,\xi^2, \xi^3$
has the easily seen property that $T/\Z_5[\pi,\lambda]$
is of length 6 $(=1+2+3)$ over $\O_K/\frakp_1$, that is, of order $5^6$. By 
comparing the discriminants of the algebras $R$ and $S$ (given by \cite{PARI2} and
Schachtelungsformel), one sees that $\O_{M,\frakp_1}/\Z_5[\pi,\lambda]$ also
has order $5^6$. Hence we have equality. The last statement follows
from the definitions. 
\end{proof}

\begin{corollary} We also have the global inclusion $\frakp_1^3 S \subset R$.
\end{corollary}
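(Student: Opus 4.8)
The plan is to derive the global inclusion $\frakp_1^3 S \subset R$ from the local statement of the preceding lemma by a standard local-global patching argument for modules over Dedekind domains. First I would observe that $R$ and $S$ are both finitely generated $\OK$-modules, and that they coincide after localisation at every prime $\frakq$ of $\OK$ other than $\frakp_1$: indeed, $S$ is obtained from $R$ by integral closure in $M$, and the conductor of $R$ in $S$ (equivalently, the set of primes where $R_\frakq \subsetneq S_\frakq$) is supported only on primes that ramify both in $K/\Q$ and in $\Q(\zeta_5)/\Q$; since $5\OK = \frakp_1^4 \frakp_2$ with $\frakp_2$ unramified in $M$ (as $\frakp_1\OM$ is prime and $\frakp_2$ has been dealt with separately), the only candidate is $\frakp_1$. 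Hence $R_\frakq = S_\frakq$ for all $\frakq \neq \frakp_1$.

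Next I would check the inclusion $\frakp_1^3 S \subset R$ one prime at a time. At $\frakp_1$, this is exactly the final assertion of the lemma, namely $\pi^3 S_{\frakp_1} \subseteq R_{\frakp_1} = \Z_5[\pi,\lambda]$, since $\pi$ generates $\frakp_1$ locally. At any other prime $\frakq$, we have $(\frakp_1^3 S)_\frakq = \frakp_1^3 S_\frakq = S_\frakq = R_\frakq$ (the first equality because $\frakp_1$ is a unit ideal after localising away from $\frakp_1$), so the inclusion is trivially an equality there. Since a submodule inclusion of finitely generated $\OK$-modules inside the fixed ambient $\OK$-module $M$ can be checked locally at all primes of $\OK$, it follows that $\frakp_1^3 S \subseteq R$ globally.

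I do not expect any serious obstacle here: the corollary is a routine globalisation, and essentially all the content is already in the lemma. The one point that needs a moment's care is the justification that $R_\frakq = S_\frakq$ away from $\frakp_1$ — i.e. that the conductor of the order $R = \OK[\zeta_5]$ in $S = \OM$ is concentrated at $\frakp_1$. This follows from the fact that $\Z[\zeta_5]$ is the full ring of integers of $\Q(\zeta_5)$, so $\OK \otimes_\Z \Z[\zeta_5]$ can fail to be maximal only at primes that ramify in both constituent fields, together with the explicit factorisation $5\OK = \frakp_1^4\frakp_2$ computed in PARI and the observed primality of $\frakp_1\OM$ (which shows $\frakp_1$ is the sole such prime). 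With that in hand, the patching argument closes the proof.
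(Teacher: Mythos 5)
Your proposal is correct and follows exactly the route the paper intends: the paper leaves the corollary without an explicit proof precisely because it is the standard localisation argument you describe, with $R_\frakq = S_\frakq$ away from $\frakp_1$ (the conductor being supported only at the prime ramified in both $K/\Q$ and $\Q(\zeta_5)/\Q$) and the inclusion at $\frakp_1$ supplied by the final assertion of the lemma. The only minor quibble is your parenthetical justification that the conductor avoids $\frakp_2$: the relevant point is simply that $\frakp_2$ appears to the first power in $5\OK=\frakp_1^4\frakp_2$, hence is unramified in $K/\Q$, rather than anything about $\frakp_1\OM$ being prime.
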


This corollary produces a fibre product arising as follows. 
If  we let $\bar S$ stand for $S/\frakp_1^3 S$ and $\bar R$ for the image of $R$ in $\bar S$,
we obtain the fibre product 
\[
\begin{array}{ccc}
R & \hookrightarrow & S \\
\downarrow &  & \downarrow \\
\bar{R}& \hookrightarrow &\bar{S}
\end{array}
\]
where the horizontal arrows are the natural inclusions and the vertical arrows are the natural projections. By \cite[Theorem 42.13]{curtis-reiner2} and the fact that all rings are commutative we have the following exact sequence
\[
S^{\times} \times {\bar{R}}^{\times} 
\longrightarrow 
{\bar{S}}^{\times} 
\longrightarrow
\Cl(R)
\longrightarrow
\Cl(S)\oplus\Cl(\bar{R}).
\]
Since  $\bar R$ 
has trivial class group (being semilocal), and since
the class group of $S$ is also trivial, the class group $\Cl(R)$ 
 is an epimorphic 
image of $U:={\bar S}^{\times} /({\bar R}^{\times}  \cdot {\rm im}(S^{\times} ))$. It thus suffices
to establish that $U$ is trivial. In fact, we will
check that $U':={\bar S}^{\times} /{\rm im}(S^{\times} )$ is already trivial. But $U'$ is precisely
the ray class group modulo $\frakp_1^3$ of $M$.  By \cite{PARI2} we find 
that this ray class group is trivial, so we are done. (It is
perhaps interesting to mention that the ray class number
of $M$ modulo $5\O_M$ is relatively large, being equal to $15625=5^6$.)

We sum up: $R = \O_K \otimes_{\Z} \Z[\zeta_5]$ has class number 1. We note that, in principle, this calculation could also be performed using the algorithm of \cite{kluners-pauli}.

\subsection{The class group of the integral group ring}

To calculate the class group of $\O_K[C_5]$, we now have to look at a second fibre product
with $\O_K[C_5]$ at the upper left; the upper right and lower left corners are occupied
by $R= \O_K \otimes_\Z \Z[\zeta_5]$ and $\O_K$, respectively. Finally, the lower right
hand corner has the ring $T = R/\lambda R = \O_K/5\O_K$. (Recall $\lambda=1-\zeta_5$.)
The class group of $R$ is trivial
(see previous section), and so is the class group of $\O_K$ (\cite{PARI2}). 
Hence, applying \cite[Corollary 49.28]{curtis-reiner2}, we find that  the class group of $\O_K[C_5]$ is an epimorphic image of 
the group $X:=T^\times/({\rm im}(R^\times)\cdot {\rm im}(\O_K^\times))$. Our final claim 
 will follow, if we can establish that $X$ is
of order 1 or 2. But, reasoning as above, $X$ is an epimorphic
image of $T^\times/{\rm im}(\O_K^\times)$, the ray class group of $K$ modulo $5\O_K$.
By \cite{PARI2} this group has order 2 . This is safe,
since we ran {\tt bnfcertify} on $K$, which confirmed the
whole output of {\tt bnfinit}. We mention in passing that the fundamental units of $K$
have remarkably small coefficients. Let us also remark that we cannot expect ray
class number 1 here, because of the existence of the quadratic extension $K(\sqrt 5)/K$.
(Getting back to a remark made at the beginning: it is possible, but not
easy, to show that $X$ is of order 1. We also note that, in principle, this calculation can be performed using the algorithm of \cite{bley-boltje}.)

This completes the proof that the class group
of $\O_K[C_5]$ has order at most 2, and this implies,
as explained at the beginning, that $K$ is Hilbert-Speiser of type $C_5$.

\subsection{Some examples and more counterexamples}\label{section}

In this section we give some examples of fields $K$ such that $K$ has
class number 2 and is Hilbert-Speiser of type $C_5$ (as mentioned
after Corollary \ref{HSL23}). The examples $K$ will be quartic fields
over the rationals with signature $(2, 1)$. Moreover, $K/\Q$ will be
unramified at 5, and $K$ will contain a nonprincipal prime $\frak p$
above 5. By Theorem \ref{maintheorem} (b), mHS-L$(K,C_5)$
holds. However, there exists a $C_5$-Galois extension $L/K$ with
${\tr}_{L/K}(\OL)={\frak p}$. This follows from the Grunwald-Wang
theorem as in the proof of Corollary \ref{bp}, but we construct one
such $L$ explicitly in the Appendix to this section. The existence of
such an extension $L$ shows that HS-L$(K,C_5)$ does not hold: The
class $({\frak p}^{-1}\mathcal{O}_L)$ in $\Cl(\mathcal{A}_{L/K})$ is
trivial by mHS-L$(K,C_5)$, so the class $(\mathcal{O}_L)$ in
$\Cl(\mathcal{A}_{L/K})$ cannot be trivial.

In order to produce our fields $K$ we used \cite{PARI2} and \cite{QaoS} to generate lists of degree 4 polynomials $f_i$ with coefficients of absolute value less than or equal to 20, and then successively sifted out along the following criteria: 
\begin{enumerate}
\item $f_i$ irreducible
\item the field $K_i$ defined by $f_i$ has signature (2,1)
\item 5 is unramified but not inert in $K_i$
\item $h_{K_i}=2$
\item at least one prime of $K_i$ above 5 is nonprincipal
\item the field $M_{i}=K_i(\zeta_5)$ has class number 2 which equals the ray class number of $M_i$ modulo $(1-\zeta_5)$.
\end{enumerate}
In fact, using \cite{QaoS}, one quickly obtains 12051 polynomials satisfying (a), (b), and (d). With some extra effort utilising \cite{bcp} we then found that  $x^4-x^3+3x^2-3x-4$ satisfies all the stated criteria. This example was  then successfully checked using \cite{PARI2} on another machine.

Let $K=\Q(\theta)$ with $\theta^{4}-\theta^{3}+3\theta^{2}-3\theta-4=0$. Since 5 does not ramify in
$K$, $K$ is arithmetically disjoint to $\Q(\zeta_5)$, and hence
$\O_K \otimes_{\Z} \Z[\zeta_5]$ is the ring of integers in $M=K(\zeta_5)$.
Since the ray class group of $M$ modulo $(1-\zeta_5)$ has order 2,
and $h_M=2$, it follows that the quotient
$U':=(\mathcal{O}_M/(1-\zeta_5))^{\times}/{\rm im}(\mathcal{O}_{M}^{\times})$ is
trivial.

We have a fibre product
with $\mathcal{O}_K[C_5]$ at the upper left, and with the upper right and lower left corners occupied
by $\mathcal{O}_M= \mathcal{O}_K \otimes_\Z \Z[\zeta_5]$ and $\mathcal{O}_K$, respectively.
Finally, the lower right hand corner has the ring $\mathcal{O}_M/(1-\zeta_5)$. 
By \cite[Theorem 42.13]{curtis-reiner2} and the fact that all rings are commutative we obtain an exact sequence which appears as the middle row in the following diagram 
\[
\begin{array}{ccccccccc}
& & & &0 & &0\\
& & & & \downarrow &  & \downarrow \\
& & & & \Cl'(\O_K[C_5])&  & \Cl(\O_M) \\
& & & & \downarrow &  & \downarrow \\
1&\rightarrow&U&\rightarrow&\Cl(\O_K[C_5])&\rightarrow& \Cl(\O_M)\oplus\Cl(\O_K),\\
& & & & \downarrow &  & \downarrow \\
& & & &\Cl(\O_K)&=&\Cl(\O_K)\\
& & & & \downarrow &  & \downarrow \\
& & & &0 & &0
\end{array}
\]
where the left and right vertical sequences are, respectively, the
exact sequence of \S3, and the exact sequence defined in terms of the
natural maps. One easily verifies that this diagram is commutative so
there is a unique map $\alpha:\Cl'(\O_K[C_5])\rightarrow \Cl(\O_M)$
which makes the resulting diagram commute. Applying the snake lemma to
the vertical sequences and maps between them we obtain the exact
sequence

\[
0
\longrightarrow 
\mathrm{ker}(\alpha)
\longrightarrow
U
\longrightarrow
0.
\] 
Hence $\alpha$ has kernel $U$. That is, the following sequence is exact
\[
1
\longrightarrow 
U
\longrightarrow
\Cl'(\O_K[C_5])
\stackrel{\alpha}{ \longrightarrow} 
\Cl(\O_M).
\]
Since $U=U'/{\rm im}(\O_K^{\times})$ is trivial, $\Cl'(\O_K[C_5])$ maps injectively into $\Cl(\O_M)$, and hence is of order 1 or 2. So it is annihilated by the Stickelberger ideal, and $K$ is Hilbert-Speiser of type $C_5$. Furthermore, as explained above, mHS-L$(K,C_5)$ holds, but  HS-L$(K,C_5)$ does not.

Using the
techniques described above, we can find many more examples of such
polynomials.  We list three here, including our example above,  along with  the corresponding discriminant $d_K$:
\[
\begin{array}{llllll}
x^4-x^3+3x^2-3x-4 & & & (d_K=-12844=-2^2\cdot13^2\cdot19)\\
x^4-x^3+4x^2-4x-1 & & & (d_K=-17051=-17^2\cdot59)\\
x^4-2x^3+3x^2+x-4 & & & (d_K=-17231, {\rm which\  is\  prime}).
\end{array}
\]

Appendix: 

A $C_5$-Galois extension $L/K$ with ${\tr}_{L/K}(\OL)={\frak p}$ for the first example $K$
in the short list at the end of \S6 can actually be constructed explicitly. From 
\cite{MM} we find a polynomial 
\begin{eqnarray*}
 g(x,t) &=& x (x^2-25)^2+(x^4-20 x^3-10 x^2+300 x-95) t^2-4 (x-3)^2 t^4 \\
   &=& x^5 + t^2 x^4 + (-20 t^2 - 50) x^3 + (-4 t^4 - 10 t^2) x^2  \\
  & & \ + (24 t^4 + 300 t^2 + 625) x + (-36 t^4 - 95 t^2)
\end{eqnarray*}
whose Galois group over $\Q(t)$ is the cyclic group $C_5$ of order 5. Whenever $\tau\in K$
is such that $g_\tau(x) = g(x,\tau)$ is irreducible over $K$, the specialised polynomial
$g_\tau$ has Galois group $C_5$ over $K$. With some guesswork it is now possible to find
a choice of $\tau$ such that the resulting extension $L=L_\tau$ is ramified at the degree
one prime over 5 in $K$ and unramified at the other prime over 5. We only give the outcome;
everything has been verified by PARI, using a variety of double-checks, such as calculating discriminants
both for the relative extension $L/K$ and the absolute field $L$, and comparing via the
Schachtelungsformel. Also the a priori fact
that $L/K$ is cyclic (which would be difficult to prove by PARI in full rigour) was
tested by looking at the factorisation of several dozen prime ideals. Let $\theta$ denote
a root of the defining polynomial for $K$, so $\theta^4-\theta^3+3 \theta^2-3 \theta -4 =0$.
We are now free again to use $x$ as the variable for the polynomial defining $L/K$. We find
\begin{eqnarray*} 
g_\tau &=& x^5 + (-9 \theta^3 + 6 \theta^2 + 15 \theta + 8) x^4 + (180 \theta^3 - 120 \theta^2 - 300 \theta - 210) x^3 \\
          & & \ +(666 \theta^3 - 8364 \theta^2 + 5370 \theta + 7728) x^2 + (-6156 \theta^3 + 51624 \theta^2 - 28620 \theta - 43823) x  \\
          & & \ + (6039 \theta^3 - 75306 \theta^2 + 48255 \theta + 69512). 
\end{eqnarray*}
Apart from the wild ramification at one prime above 5, $L/K$ has tame ramification in two 
primes of norm 66821 and 4268881 respectively. (These two numbers are prime.)  

\section{Acknowledgments}

Carter would like to thank the members of Laboratoire A2X of the Universit{\'e} de Bordeaux I, and the members of the Mathematics Department of the Universit\"at der Bundeswehr M\"unchen for their kind hospitality during his visits in the 2006--2007 academic year.

Johnston would like to thank the Deutscher Akademischer Austausch Dienst (German Academic Exchange Service) for a grant allowing him to visit Greither at Universit\"at der Bundeswehr M\"unchen
for the 2006--2007 academic year, and his hosts for making his stay most productive and enjoyable.

\end{document}